\newtheorem{theorem}{Theorem}[section]
\newtheorem{proposition}[theorem]{Proposition}
\newtheorem{corollary}[theorem]{Corollary}
\newtheorem{lemma}[theorem]{Lemma}
\newtheorem{definition}[theorem]{Definition}
\newtheorem{example}[theorem]{Example}
\newtheorem{remark}[theorem]{Remark}
\numberwithin{equation}{section}
\begin{document}
\title{\bf Foxby equivalence relative to $C$-$fp_{n}$-injective and $C$-$fp_{n}$-flat  modules}
\author{
\\{\bf Mostafa Amini}\\
\small Department of Mathematics, Payame Noor University, Tehran, Iran\\
\small E-mail: dr.mostafa56@pnu.ac.ir\\
\\{\bf Alireza Vahidi}\\
\small Department of Mathematics, Payame Noor University, Tehran, Iran\\
\small E-mail: vahidi.ar@pnu.ac.ir\\
\\{\bf Farideh Rezaei}\\
\small Department of Mathematics, Payame Noor University, Tehran, Iran\\
\small E-mail: fa.rezaei@student.pnu.ac.ir
}

\date{}
\maketitle
\begin{abstract}
Let $R$ and $S$ be rings, $C= {}_SC_R$ a (faithfully) semidualizing bimodule, and $n$ a positive integer or $n=\infty$. In this paper,  we  introduce the concepts of $C$-$fp_n$-injective $R$-modules and $C$-$fp_n$-flat $S$-modules as a common generalization of some known modules such as $C$-$FP_{n}$-injective (resp. $C$-weak injective) $R$-modules and $C$-$FP_{n}$-flat (resp. $C$-weak flat) $S$-modules. Then we investigate $C$-$fp_{n}$-injective and $C$-$fp_{n}$-flat dimensions of modules, where the classes of these modules, namely $Cfp_nI(R)_{\leq k}$ and $Cfp_nF(S)_{\leq k}$, respectively.  We study Foxby equivalence relative to these classes, and also the existence of $Cfp_nI(R)_{\leq k}$ and $Cfp_nF(S)_{\leq k}$ preenvelopes and covers. Finally, we study the exchange properties of these classes, as well as preenvelopes (resp. precovers) and  Foxby equivalence, under almost excellent extensions of rings. \\
\vspace*{-0.25cm}\\
{\bf Keywords:} $C$-$fp_n$-flat modules; $C$-$fp_n$-injective modules; Foxby equivalence; semidualizing bimodules.\\
{\bf 2020 Mathematics Subject Classification:} 16E10; 16E30; 16E65; 16P70.
\end{abstract}
\section{Introduction}\label{1}
Throughout this paper, $n$ is a positive integer, $R$ and $S$ are fixed associative rings with unites, and all $R$- or $S$-modules are understood to be unital left $R$- or $S$-modules (unless specified otherwise). ${}_SM$ (resp. $M_R$) is used to denote that $M$ is a left $S$-module (resp. right $R$-module). Also, ${}_SM_R$ is used to denote that $M$ is an $(S, R)$-bimodule which means that $M$ is both a left $S$-module and a right $R$-module, and these structures are compatible. Right $R$- or $S$-modules are identified with left modules over the opposite rings $R^{op}$ and $S^{op}$.

Wei and Zhang in \cite{JZW} introduced the notion of $fp_n$-injective (resp. $fp_n$-flat) modules as a generalization of $fp$-injective and $FP_n$-injective (resp. $fp$-flat and $FP_n$-flat) modules, where $fp$-injective and $fp$-flat modules introduced by Garkusha and Generalov in \cite{E.SS}, and also $FP_n$-injective and $FP_n$-flat modules introduced by Bravo and P\'{e}rez in \cite{ARB}.

Over a commutative Noetherian ring $R$, a semidualizing module for $R$ is a finitely generated $R$-module $C$ with $\operatorname{Hom}_{R}(C, C)$  canonically isomorphic to $R$ and $\operatorname{Ext}_{R}^{i}(C, C)= 0$ for all $i\geq 1$. Semidualizing modules (under different names) were independently studied by Foxby in \cite{H.B}, Vasconcelos in \cite{WWV}, and Golod in \cite{E.SG}. Araya et al., in \cite{AR}, extended the notion of semidualizing modules to a pair of non-commutative, but Noetherian rings. Holm and White, in \cite{HW}, generalized the notion of a semidualizing module to general associative rings, and defined and studied Auslander and Bass classes under a semidualizing bimodule $C$, and then introduced the notions of $C$-flat, $C$-projective, and $C$-injective modules, where $C= {}_SC_R$ stands for a semidualizing bimodule.

In \cite{WGZ}, Wu and Gao introduced the notion of $C$-$FP_n$-injective (resp. $C$-$FP_n$-flat) modules as a common generalization of some known modules such as $C$-injective, $C$-$FP$-injective and $C$-weak injective (resp. $C$-flat and  $C$-weak flat) modules (see \cite{Z.TT,HW,ZO}). Furthermore, they investigated Foxby equivalence relative to $C$-$FP_n$-injective $R$-modules and $C$-$FP_n$-flat $S$-modules, proved that the classes $\mathcal{FI}^{n}_C(R)$ and $\mathcal{FF}^{n}_C(S)$ are  preenveloping and covering, and found that when these classes are closed under extensions, cokernels of monomorphisms, and kernels of epimorphisms, where $\mathcal{FI}^{n}_C(R)$ and $\mathcal{FF}^{n}_C(S)$ are the classes of $C$-$FP_n$-injective $R$-modules and $C$-$FP_n$-flat $S$-modules, respectively.

Recently, the homological theory for injective modules and flat modules with respect to semidualizing bimodules has became an important area of research (see for example \cite{B1, B2, Z.MT, Z.TT, HW, WGZ}). In this paper, we introduce and study the notion of $C$-$fp_n$-injective (resp. $C$-$fp_n$-flat) modules as a  common generalization of $C$-weak injective and $C$-$FP_n$-injective (resp. $C$-weak flat and $C$-$FP_n$-flat) modules. 


In Section \ref{2}, we state some fundamental notions and some preliminary results. Then we present some features of  the Auslander and Bass classes, and modules of $fp_n$-injective and $fp_n$-flat dimension at most $k$. In Section \ref{3}, first we introduce $C$-$fp_n$-injective $R$-modules and $C$-$fp_n$-flat $S$-modules, and then we give some homological relationships between the classes $fp_nI(S)_{\leq k}$, $fp_nF(R)_{\leq k}$, $Cfp_nI(R)_{\leq k}$, $Cfp_nF(S)_{\leq k}$, $\mathcal{A}_{C}(R)$, and $\mathcal{B}_{C}(S)$, where these classes are the class of $S$-modules with $fp_n$-injective dimention at most $k$, the class of $R$-modules with $fp_n$-flat dimention at most $k$, the class of $R$-modules with $C$-$fp_n$-injective dimention at most $k$,  the class of $S$-modules with $C$-$fp_n$-injective dimention at most $k$, the Auslander class, and  the Bass class under faithfully semidualizing bimodules $C$, respectively. Among other results, we prove that (i) Foxby equivalence relative to these classes, (ii) for an $R$-module $M$ (resp. $S$-module $N$), $M\in Cfp_nI(R)_{\leq k}$ (resp. $N\in Cfp_nF(S)_{\leq k}$) if and only if $M\in \mathcal{A}_{C}(R)$ (resp. $N\in \mathcal{B}_{C}(S)$) and $C\otimes_{R} M\in fp_nI(S)_{\leq k}$ (resp. $\operatorname{Hom}_{S}(C, N)\in fp_nF(R)_{\leq k}$), and (iii) the classes $Cfp_{n}I(R)$ and $Cfp_{n}F(S)$ are preenveloping and covering. Section \ref{4} considering  faithfully semidualizing modules $C$ is devoted to the exchange properties of these classes under chang of rings. For example, let $S\geq R$ be an almost excellent extension. Then we  show that (i) the classe $(C\otimes_{R}S)fp_{n}I(R)$ and $(C\otimes_{R}S)fp_{n}F(R)$ are preenveloping and precovering, (ii)  if $M\in \mathcal{A}_{C}(R)$, then $(S\otimes_{R}M)\in \mathcal{A}_{C\otimes_{R}S}(S)$;
 if $M\in \mathcal{B}_{C}(R)$, then ${\rm Hom}_{R}(S,M)\in \mathcal{B}_{C\otimes_{R}S}(S)$, and (ii) existence Foxby equivalence relative to the classes $(C\otimes_{R}S)fp_{n}I(R)$, $(C\otimes_{R}S)fp_{n}F(R)$, $\mathcal{A}_{C\otimes_{R}S}(S)$ and $\mathcal{B}_{C\otimes_{R}S}(S)$.

\section{Preliminaries}\label{2}



In this section, some fundamental concepts and notations are stated.

\begin{definition}\label{2-1}
\emph{(see \cite[Section 1 and Definitions 2.2, 3.1, and 3.2]{ARB}, \cite[Definitions 2.8 and 2.18]{.TZ}, \cite[Definition 2.1]{Z.G2},  \cite[Definition 2.1]{Z.W}, \cite[1.2, 1.3, and 1.4]{Z.TT},  \cite[Definition 3.1]{WGZ}, \cite[Definition 2.1]{Z.TT} and \cite[Definition 2.1]{JZW})}
\begin{itemize}
\item[\emph{(i)}] \emph{An $R$-module $M$ is called \textit{finitely $n$-presented} if there exists an exact sequence
\[F_n\longrightarrow F_{n- 1}\longrightarrow \cdots\longrightarrow F_i\longrightarrow \cdots\longrightarrow F_1\longrightarrow F_0\longrightarrow M\longrightarrow 0,\]
where each $F_i$ is a finitely generated free $($equivalently, finitely generated projective$)$ $R$-module. A ring $R$ is called \textit{$n$-coherent} if every finitely $n$-presented $R$-module is finitely $(n+ 1)$-presented;}

\item[\emph{(ii)}] \emph{An $R$-module $M$ is called  \textit{$FP_n$-injective} or \textit{$(n ,0)$-injective} $($resp. \textit{$FP_n$-flat} or \textit{$(n, 0)$-flat}$)$ if $\operatorname{Ext}_{R}^{1}(L, M)= 0$ $($resp. $\operatorname{Tor}_{1}^{R}(L, M)= 0)$ for any finitely $n$-presented $R$-module $($resp. $R^{op}$-module$)$ $L$. $\mathcal{FP}_n$-$\operatorname{Inj}(R)$ and $\mathcal{FP}_n$-$\operatorname{Flat}(R)$ denote the class of $FP_n$-injective $R$-modules and the class of $FP_n$-Flat $R$-modules, respectively;}

\item[\emph{(iii)}] \emph{The \textit{$FP_n$-injective dimension} $($or \textit{$(n, 0)$-injective dimension}$)$ and the \textit{$FP_n$-flat dimension} $($or \textit{$(n, 0)$-flat dimension}$)$ of an $R$-module $M$ are defined by
\[\mathcal{FP}_n.\operatorname{id}_R(M)= \inf\{k: \operatorname{Ext}_R^{k+ 1}(L, M)= 0\ \text{for every finitely $n$-presented $R$-module}\ L\}\]
and
\[\mathcal{FP}_n.\operatorname{fd}_R(M)= \inf\{k: \operatorname{Tor}^R_{k+ 1}(L, M)=0\ \text{for every finitely $n$-presented $R^{op}$-module}\ L\},\]
respectively;}

\item[\emph{(iv)}] \emph{A \textit{degreewise finite projective resolution (or, super finitely presented)} of an $R$-module $M$ is a projective resolution
of $M$:
\[\cdots\longrightarrow P_{i+ 1}\longrightarrow P_i\longrightarrow P_{i- 1}\longrightarrow\cdots\longrightarrow P_1\longrightarrow P_0\longrightarrow U\longrightarrow 0,\]
where each $P_i$ is a finitely generated projective $($equivalently, finitely generated free$)$ $R$-module;}

\item[\emph{(v)}] \emph{An $R$-module $M$ is called  \textit{weak injective} $($resp. \textit{weak flat}$)$  if $\operatorname{Ext}_{R}^{1}(U, M)= 0$ $($resp. $\operatorname{Tor}_{1}^{R}(U, M)= 0)$ for any super finitely presented $R$-module $($resp. $R^{op}$-module$)$ $U$. }

\item[\emph{(vi)}] \emph{An $(S, R)$-bimodule $C= {}_SC_R$ is \textit{semidualizing} if the following conditions hold:
\begin{itemize}
\item[$(a_1)$]  ${}_SC$ admits a degreewise finite $S$-projective resolution;
\item[$(a_2)$]  $C_R$ admits a degreewise finite $R^{op}$-projective resolution;
\item[$(b_1)$]  The homothety map ${}_S\gamma: {{}_SS_S}\longrightarrow \operatorname{Hom}_{R^{op}}(C, C)$ is an isomorphism;
\item[$(b_2)$]  The homothety map $\gamma_{R}: {}_RR_R\longrightarrow \operatorname{Hom}_{S}(C, C)$ is an isomorphism;
\item[$(c_1)$] $\operatorname{Ext}_{S}^{i}(C, C)= 0$ for all $i\geq 1$;
\item[$(c_2)$] $\operatorname{Ext}_{R^{op}}^{i}(C, C)= 0$ for all $i\geq 1$.
\end{itemize}
A semidualizing bimodule ${}_SC_R$ is \textit{faithfully semidualizing} if it satisfies the following conditions for all modules ${}_SN$ and $M_R$:
\begin{itemize}
\item[$(1)$] If $\operatorname{Hom}_{S}(C, N)= 0$, then $N= 0$;
\item[$(2)$] If $\operatorname{Hom}_{R^{op}}(C, M)= 0$, then $M= 0$.
\end{itemize}
By \cite[Proposition 3.2]{HW}, there exist many examples of faithfully semidualizing bimodules were provided over a wide class of non-commutative rings;}

\item[\emph{(vii)}] \emph{The \textit{Auslander class} $\mathcal{A}_{C}(R)$ with respect to $C$ consists of all $R$-modules $M$ satisfying the following conditions:
\begin{itemize}
\item[$(A_1)$]  $\operatorname{Tor}_{i}^{R}(C, M)= 0$ for all $i\geq 1$;
\item[$(A_2)$]  $\operatorname{Ext}_{S}^{i}(C, C\otimes_{R} M)= 0$ for all $i\geq 1$;
\item[$(A_3)$]  The natural evaluation homomorphism $\mu_{M}: M\longrightarrow \operatorname{Hom}_{S}(C, C\otimes_{R} M)$ is an isomorphism $($of $R$-modules$)$.
\end{itemize}
The \textit{Bass class} $\mathcal{B}_{C}(S)$ with respect to $C$ consists of all $S$-modules $N$ satisfying the following conditions:
\begin{itemize}
\item[$(B_1)$]  $\operatorname{Ext}_{S}^{i}(C ,N)= 0$ for all $i\geq 1$;
\item[$(B_2) $]  $\operatorname{Tor}_{i}^{R}(C, \operatorname{Hom}_{S}(C, N))= 0$ for all $i\geq 1$;
\item[$(B_3)$]  The natural evaluation homomorphism $\nu_{N}: C\otimes_{R} \operatorname{Hom}_{S}(C, N)\longrightarrow N$ is an isomorphism $($of $S$-modules$)$.
\end{itemize}
It is an important property of Auslander and Bass classes that they are equivalent under the pair of functors:
\[\xymatrix@C=3cm{\mathcal{A}_{C}(R)\ar@<1ex>[r]^{C\otimes_{R} -}_{\sim}&\ar@<1ex>[l]^{\operatorname{Hom}_{S}(C, -)}\mathcal{B}_{C}(S)}\]
$($see \cite[Proposition 4.1]{HW}$)$;}

\item[\emph{(viii)}] \emph{An $R$-module is called \textit{$C$-weak injective} if it has the form $\operatorname{Hom}_{S}(C, X)$ for some weak injective  $S$-module $X$. An $S$-module is called \textit{$C$-weak flat} if it has the form $C\otimes_{R} Y$ for some  weak flat $R$-module $Y$;}

\item[\emph{(ix)}] \emph{An $R$-module is called \textit{$C$-$FP_n$-injective} if it has the form $\operatorname{Hom}_{S}(C, X)$ for some $FP_n$-injective  $S$-module $X$. An $S$-module is called \textit{$C$-$FP_n$-flat} if it has the form $C\otimes_{R} Y$ for some $FP_n$-flat $R$-module $Y$;}

\item[\emph{(x)}]  \emph{An $R$-module $M$ is called \textit{$fp_n$-injective} $($resp. \textit{$fp_n$-flat}$)$ if for every exact sequence $0\longrightarrow K\longrightarrow L$ with $K$ and $L$ are finitely $n$-presented $R$-modules $($resp. $R^{op}$-modules$)$, the induced sequence $\operatorname{Hom}_{R}(L, M)\longrightarrow \operatorname{Hom}_{R}(K, M)\longrightarrow 0$ $($resp. $0\longrightarrow K\otimes_{R} M\longrightarrow L\otimes_{R} M)$ is exact. $fp_nI(R)$ and $fp_nF(R)$ denote the class of $fp_n$-injective $R$-modules and the class of $fp_n$-Flat $R$-modules, respectively.}
\end{itemize}
\end{definition}

By \cite[Proposition 1.7(1)]{ARB}, every $FP_n$-injective (resp. $FP_n$-flat) module is $fp_m$-injective (resp. $fp_m$-flat) for any $m\geq n$.  But not conversely, see Example \ref{2-1-2}. The  Proposition \ref{2-2} shows that the converse is also true over $n$-coherent rings.

\begin{definition}\label{2-1}
{\rm Let $\mathcal{Y}=\cdots\stackrel{f_2}\longrightarrow F_1\stackrel{f_1}\longrightarrow F_0\stackrel{f_0}\longrightarrow U\longrightarrow 0,$ be an exact sequence of projective $R$-modules $F_i$. Then $\mathcal{Y}$ is called $\mathcal{Y}$-finitely presented $($equivalently, super finitely presented in  \cite{Z.G2}$)$  if $U$  and ${\rm Ker} f_{i}$ are finitely presented for any $i\geq 0$.} 
\end{definition}

\begin{proposition}\label{3-5-A-B}
Let $C$ be a  semidualizing module. Then the following assertions hold true:
\begin{itemize}
\item[\emph{(i)}] $M\in\mathcal{A}_{C}(R)$ if and only if $M^{*}\in\mathcal{B}_{C}(R^{op})$;
\item[\emph{(ii)}] $M\in\mathcal{B}_{C}(R)$ if and only if $M^{*}\in\mathcal{A}_{C}(R^{op})$.
\end{itemize}
\end{proposition}

\begin{proof}
(i). $(\Rightarrow)$ Assume that $M\in\mathcal{A}_{C}(R)$. Then by \cite[Lemma 3.53]{Rot2}, there is a $(-\otimes_{R}M)^{*}$-exact exact sequence $\mathcal{Y}=\cdots\longrightarrow F_1\longrightarrow F_0\longrightarrow C\longrightarrow 0,$ with each $F_{i}$ is finitely generated and free. So  by \cite[Theorem 2.76 ]{Rot2}, it is easy
to check that $0=\operatorname{Tor}_{i}^{R}(C, M)^{*}\cong \operatorname{Ext}_{R^{op}}^{i}(C, M^{*})$ for any $i\geq 1.$

On the other hand, we have $\operatorname{Ext}_{R}^{i}(C, C\otimes_{R}M)=0,$ and  so ${\rm Hom}_{R}(\mathcal{Y},C\otimes_{R}M)$ is exact. Since $\mathcal{Y}$ is $\mathcal{Y}$-finitely presented, then by \cite[Lemmas 3.53 and 3.55 and Theorem 2.76]{Rot2},  we deduce that  ${\rm Hom}_{R}(\mathcal{Y},C\otimes_{R}M)$-exact if and only if ${\rm Hom}_{R^{op}}(\mathcal{Y},C\otimes_{R}M)^{*}$-exact if and only if $\mathcal{Y}\otimes_{R^{op}}(C\otimes_{R}M)^{*}$-exact if and only if $\mathcal{Y}\otimes_{R^{op}}{\rm Hom}_{R^{op}}( C,M^{*})$-exact. Hence $\operatorname{Tor}_{i}^{R^{op}}(C, \operatorname{Hom}_{R^{op}}(C, M^{*}))= 0$ for all $i\geq 1$.    Also, we have $M\cong{\rm Hom}_{R}(C,C\otimes_{R}M)$. So by \cite[Lemma 3.55]{Rot2}, $M^{*}\cong{\rm Hom}_{R^{op}}(C,C\otimes_{R}M)^{*}\cong C\otimes_{R^{op}}(C\otimes_{R}M)^{*}\cong C\otimes_{R^{op}}{\rm Hom}_{R^{op}}(C,M^{*})$.  Then, it follows that $M^{*}\in\mathcal{B}_{C}(R^{op})$.

$(\Leftarrow)$ Let $M^{*}\in\mathcal{B}_{C}(R^{op})$.  Then  there is a ${\rm Hom}_{R^{op}}(-,M^*)$-exact exact sequence $\mathcal{Y}=\cdots\longrightarrow F_1\longrightarrow F_0\longrightarrow C\longrightarrow 0,$ with each $F_{i}$ is finitely generated and free. So by \cite[Theorem 2.76 and Lemma 3.53 ]{Rot2}, ${\rm Hom}_{R^{op}}(\mathcal{Y},M^{*})$-exact if and only if $(\mathcal{Y}\otimes_{R^{op}}M)^{*}$-exact if and only if $(\mathcal{Y}\otimes_{R}M)$-exact. So  $\operatorname{Tor}_{i}^{R}(C, M)=0$ for any $i\geq 1$. Also, we have $\operatorname{Tor}_{i}^{R^{op}}(\mathcal{Y}, {\rm Hom}_{R^{op}}(C,M^{*}))=0$ for any $i\geq 1$. Then  since $\mathcal{Y}$ is $\mathcal{Y}$-finitely presented, $\mathcal{Y}\otimes_{R^{op}}{\rm Hom}_{R^{op}}(C,M^{*})$-exact if and only if $\mathcal{Y}\otimes_{R^{op}}(C\otimes_{R}M)^{*}$-exact if and only ${\rm Hom}_{R^{op}}(\mathcal{Y},C\otimes_{R}M)^{*}$-exact if and only if  ${\rm Hom}_{R}(\mathcal{Y},C\otimes_{R}M)$-exact, and so $\operatorname{Ext}_{R}^{i}(C, C\otimes_{R}M)=0$ for any $i\geq 1$.  We have $M^{*}\cong C\otimes_{R^{op}}{\rm Hom}_{R^{op}}(C,M^{*})\cong C\otimes_{R^{op}}(C\otimes_{R}M)^{*}\cong {\rm Hom}_{R}(C,C\otimes_{R}M)^{*}$, and so $M\cong{\rm Hom}_{R}(C,C\otimes_{R}M)$. Consequently,  $M\in\mathcal{A}_{C}(R)$.

(ii). This is similar to that of (i).
\end{proof}

\begin{proposition}\label{2-2}
Let $R$ $($resp. $R^{op})$ be an $n$-coherent ring and $M$ an $R$-module. Then $M$ is $fp_m$-injective $($resp. $fp_m$-flat$)$ if and only if $M$ is $FP_n$-injective $($resp. $FP_n$-flat$)$  for any $m\geq n$.
\end{proposition}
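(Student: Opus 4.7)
The ``if'' direction is immediate from the citation \cite[Proposition 1.7(1)]{ARB} mentioned just before the statement: $FP_n$-injective (resp.\ $FP_n$-flat) modules are already $fp_m$-injective (resp.\ $fp_m$-flat) for every $m\geq n$, without any coherence hypothesis. So the whole content is the converse, and the whole role of $n$-coherence is to guarantee that one can enlarge any partial free resolution far enough so that a well-chosen syzygy is finitely $m$-presented.

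\textbf{Plan for the injective case.} Assume $R$ is $n$-coherent and $M$ is $fp_m$-injective with $m\geq n$. Pick a finitely $n$-presented $R$-module $L$; I want to show $\operatorname{Ext}_R^{1}(L,M)=0$. First, I iterate the definition of $n$-coherence: since every finitely $n$-presented module is finitely $(n+1)$-presented, an easy induction gives that every finitely $n$-presented module is finitely $k$-presented for \emph{every} $k\geq n$. In particular, $L$ admits a partial resolution
\[
F_{m+1}\longrightarrow F_{m}\longrightarrow\cdots\longrightarrow F_{1}\longrightarrow F_{0}\longrightarrow L\longrightarrow 0
\]
by finitely generated free $R$-modules. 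Set $K=\ker(F_0\to L)$. Truncating the above sequence gives an exact sequence $F_{m+1}\to F_m\to\cdots\to F_1\to K\to 0$ of finitely generated free modules over $K$, so $K$ is finitely $m$-presented. The module $F_0$ is finitely generated free, hence trivially finitely $m$-presented.

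\textbf{Finishing.} Now apply $\operatorname{Hom}_R(-,M)$ to the short exact sequence $0\to K\to F_0\to L\to 0$, obtaining
\[
\operatorname{Hom}_R(F_0,M)\longrightarrow\operatorname{Hom}_R(K,M)\longrightarrow\operatorname{Ext}_R^{1}(L,M)\longrightarrow\operatorname{Ext}_R^{1}(F_0,M)=0.
\]
The inclusion $0\to K\to F_0$ is an exact sequence of finitely $m$-presented $R$-modules, so $fp_m$-injectivity of $M$ makes the left-hand map surjective. Hence $\operatorname{Ext}_R^{1}(L,M)=0$, and since $L$ was an arbitrary finitely $n$-presented module, $M$ is $FP_n$-injective.

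\textbf{Flat case and main obstacle.} The $fp_m$-flat $\Rightarrow$ $FP_n$-flat direction is completely parallel, working in the category of right $R$-modules (whence the hypothesis on $R^{op}$): take a finitely $n$-presented $R^{op}$-module $L$, use $n$-coherence of $R^{op}$ to produce a resolution $F_{m+1}\to\cdots\to F_0\to L\to 0$ by finitely generated free right modules, set $K=\ker(F_0\to L)$, apply $-\otimes_R M$ to $0\to K\to F_0\to L\to 0$, and invoke $fp_m$-flatness to get $\operatorname{Tor}_1^{R}(L,M)=0$. The only non-formal step is the observation that iterated $n$-coherence lifts a finitely $n$-presented module to a finitely $m$-presented one (and, simultaneously, makes its first syzygy finitely $m$-presented); everything else is a routine diagram-chase with $\operatorname{Ext}^1$ / $\operatorname{Tor}_1$. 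I do not expect any real obstacle beyond making sure the indices line up when truncating the resolution.
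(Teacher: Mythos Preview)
Your proof is correct and follows essentially the same approach as the paper: use $n$-coherence (iterated) to produce a short exact sequence $0\to K_0\to F_0\to L\to 0$ with $K_0$ and $F_0$ finitely $m$-presented, then apply the definition of $fp_m$-injective (resp.\ $fp_m$-flat) to force $\operatorname{Ext}_R^1(L,M)=0$ (resp.\ $\operatorname{Tor}_1^R(L,M)=0$). The paper's write-up is terser, but the argument is the same.
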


\begin{proof}
Assume that $M$ is an $fp_m$-injective $($resp. $fp_m$-flat$)$ $R$-module and $L$ is a finitely $n$-presented $R$-module $($resp. $R^{op}$-module$)$. Since $R$ $($resp. $R^{op})$ is an $n$-coherent ring, there is an exact sequence
\[0\longrightarrow K_0\longrightarrow F_0\longrightarrow L\longrightarrow 0\]
of $R$-modules $($resp. $R^{op}$-modules$)$, where $K_0$ and $F_0$ are finitely $m$-presented. Thus we get $\operatorname{Ext}_{R}^{1}(L, M)= 0$ (resp. $\operatorname{Tor}_{1}^{R}(L, M)= 0$) by applying the derived functors of $\operatorname{Hom}_{R}(-, M)$ (resp. $-\otimes_{R} M$) to the above short exact sequence. Hence $M$ is an $FP_n$-injective $($resp.  $FP_n$-flat$)$ $R$-module.
\end{proof}
The following lemmas will be useful in the proof of the first main result of this section.

\begin{lemma}\label{3-3}
Suppose that $M$ is an $fp_n$-injective $($resp. $fp_n$-flat$)$ $R$-module and that
\[0\longrightarrow K\longrightarrow F\longrightarrow L\longrightarrow 0\]
is a short exact sequence of $R$-modules $($resp. $R^{op}$-modules$)$ such that $K$ is finitely $n$-presented and $F$ is finitely generated and free. Then $\operatorname{Ext}^{1}_R(L, M)= 0$ $($resp. $\operatorname{Tor}_{1}^R(L, M)= 0)$.
\end{lemma}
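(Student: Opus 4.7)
The plan is to apply the defining exactness property of $fp_n$-injectivity (resp. $fp_n$-flatness) directly to the short exact sequence in the hypothesis, and then feed the resulting surjectivity (resp. injectivity) into the standard long exact sequence for $\operatorname{Ext}$ (resp. $\operatorname{Tor}$).

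The key preliminary observation is that the finitely generated free $R$-module $F$ is automatically finitely $n$-presented for every $n$: the resolution $\cdots \to 0 \to 0 \to F \to F \to 0$ certifies this. Consequently, the inclusion $0 \to K \to F$ is a monomorphism between two finitely $n$-presented $R$-modules (resp. $R^{op}$-modules), so the definition of $fp_n$-injectivity (resp. $fp_n$-flatness) of $M$ applies and yields that
\[
\operatorname{Hom}_R(F, M) \longrightarrow \operatorname{Hom}_R(K, M) \longrightarrow 0
\]
is exact (resp. $0 \longrightarrow K \otimes_R M \longrightarrow F \otimes_R M$ is exact).

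Now I would apply the functor $\operatorname{Hom}_R(-, M)$ (resp. $- \otimes_R M$) to the given short exact sequence $0 \to K \to F \to L \to 0$ and extract the relevant piece of the long exact sequence:
\[
\operatorname{Hom}_R(F, M) \longrightarrow \operatorname{Hom}_R(K, M) \longrightarrow \operatorname{Ext}^1_R(L, M) \longrightarrow \operatorname{Ext}^1_R(F, M)
\]
(respectively $\operatorname{Tor}_1^R(F, M) \longrightarrow \operatorname{Tor}_1^R(L, M) \longrightarrow K \otimes_R M \longrightarrow F \otimes_R M$). Since $F$ is free, the flanking terms $\operatorname{Ext}^1_R(F, M)$ and $\operatorname{Tor}_1^R(F, M)$ vanish. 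Combined with the surjectivity (resp. injectivity) obtained from the previous step, this forces $\operatorname{Ext}^1_R(L, M) = 0$ (resp. $\operatorname{Tor}_1^R(L, M) = 0$), as required.

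There is no real obstacle here; the only subtlety is the remark that a finitely generated free module is finitely $n$-presented for every $n$, so that the definition of $fp_n$-injectivity can be invoked with $F$ playing the role of the codomain. The rest is a routine unwinding of the long exact sequence, and the argument is strictly parallel in the injective and flat cases, differing only by dualization of the functors.
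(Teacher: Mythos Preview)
Your proof is correct and follows exactly the approach the paper takes: apply the long exact sequence in $\operatorname{Ext}$ (resp.\ $\operatorname{Tor}$) to the given short exact sequence, use freeness of $F$ to kill the outer term, and use the defining property of $fp_n$-injectivity (resp.\ $fp_n$-flatness) for the pair $K\hookrightarrow F$ to handle the remaining map. The paper compresses this into a single sentence, but the content is identical; your explicit remark that a finitely generated free module is finitely $n$-presented is the only point the paper leaves implicit.
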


\begin{proof}
By applying the derived functors of $\operatorname{Hom}_{R}(-, M)$ (resp. $-\otimes_{R} M$) to the above short exact sequence, the assertion follows.
\end{proof}


\begin{lemma}\label{3-4}
Suppose that $M$ is an $fp_n$-injective $($resp. $fp_n$-flat$)$ $R$-module and that
\[0\longrightarrow X_0\longrightarrow X_1\longrightarrow X_2\longrightarrow \cdots\longrightarrow X_{j- 1}\longrightarrow X_j\longrightarrow X_{j+ 1}\longrightarrow \cdots\]
is an exact sequence of $R$-modules $($resp. $R^{op}$-modules$)$ such that $X_j$ is finitely $n$-presented for all $j\geq 0$. Then  the sequence
\[\cdots\longrightarrow \operatorname{Hom}_{R}(X_j, M)\longrightarrow \cdots\longrightarrow \operatorname{Hom}_{R}(X_2, M)\longrightarrow \operatorname{Hom}_{R}(X_1, M)\longrightarrow \operatorname{Hom}_{R}(X_0, M)\longrightarrow 0\]
$($resp.
\[0\longrightarrow X_0\otimes_{R} M\longrightarrow X_1\otimes_{R} M\longrightarrow X_2\otimes_{R} M\longrightarrow \cdots\longrightarrow X_j\otimes_{R} M\longrightarrow \cdots)\]
is exact.
\end{lemma}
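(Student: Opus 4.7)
The plan is to split the given long exact sequence at each spot into short exact sequences
\[ 0 \longrightarrow Z_j \longrightarrow X_j \longrightarrow Z_{j+1} \longrightarrow 0, \]
where $Z_j := \ker(X_j \to X_{j+1}) = \operatorname{im}(X_{j-1}\to X_j)$ for $j\geq 1$ and $Z_1\cong X_0$ (the latter because the given sequence begins with the injection $0\to X_0\to X_1$). Applying the defining property of $fp_n$-injectivity (resp.\ $fp_n$-flatness) to each inclusion $Z_j\hookrightarrow X_j$ then yields the surjectivity of $\operatorname{Hom}_R(X_j,M)\to\operatorname{Hom}_R(Z_j,M)$ (resp.\ the injectivity of $Z_j\otimes_R M\to X_j\otimes_R M$). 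Splicing these pieces, together with the right-exactness of $-\otimes_R M$ (resp.\ the cokernel description of $\operatorname{Hom}_R(-,M)$), produces the required exactness at every spot of the induced complex.

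The pivotal step, and the main obstacle, is to verify that each $Z_j$ is itself finitely $n$-presented, since without this we cannot invoke the defining property of $fp_n$-injectivity, which concerns only injections between finitely $n$-presented modules. I would prove this by induction on $j$. The base case $Z_1\cong X_0$ is immediate from the hypothesis. For the inductive step, one applies the standard closure property of finitely $n$-presented modules (a horseshoe-type argument as in \cite{ARB}): in a short exact sequence $0\to A\to B\to C\to 0$, if $A$ is finitely $(n-1)$-presented and $B$ is finitely $n$-presented, then $C$ is finitely $n$-presented. Since every finitely $n$-presented module is in particular finitely $(n-1)$-presented, the inductive hypothesis supplies the hypothesis on $A=Z_j$, while the assumption of the lemma supplies the hypothesis on $B=X_j$, giving $C=Z_{j+1}$ finitely $n$-presented.

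For the $\operatorname{Hom}$-case splicing: any $f\in\operatorname{Hom}_R(X_j,M)$ with $f\circ(X_{j-1}\to X_j)=0$ vanishes on $Z_j$ and hence factors as $X_j\twoheadrightarrow X_j/Z_j\cong Z_{j+1}\xrightarrow{\bar f}M$; by the surjectivity of $\operatorname{Hom}_R(X_{j+1},M)\to\operatorname{Hom}_R(Z_{j+1},M)$ (the case $j+1$ of the previous paragraph), $\bar f$ extends to $g\colon X_{j+1}\to M$ whose pullback along $X_j\to X_{j+1}$ is $f$; exactness at $\operatorname{Hom}_R(X_0,M)$ is the instance $j=1$. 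For the tensor case one uses that $X_j\to X_{j+1}$ factors as $X_j\twoheadrightarrow Z_{j+1}\hookrightarrow X_{j+1}$, where the second arrow stays injective after $-\otimes_R M$ by $fp_n$-flatness; therefore
\[ \ker(X_j\otimes_R M\to X_{j+1}\otimes_R M)=\ker(X_j\otimes_R M\twoheadrightarrow Z_{j+1}\otimes_R M)=\operatorname{im}(Z_j\otimes_R M\to X_j\otimes_R M), \]
and the latter equals $\operatorname{im}(X_{j-1}\otimes_R M\to X_j\otimes_R M)$ via the surjection $X_{j-1}\twoheadrightarrow Z_j$ and right-exactness of $-\otimes_R M$. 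Injectivity of $X_0\otimes_R M\to X_1\otimes_R M$ is again immediate from $fp_n$-flatness applied to $X_0\hookrightarrow X_1$.
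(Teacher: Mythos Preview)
Your proof is correct and follows essentially the same approach as the paper: both arguments split the long exact sequence into short exact sequences using the images/cokernels (your $Z_{j+1}$ is the paper's $C_j=\operatorname{Coker}(X_{j-1}\to X_j)$), prove by induction via \cite[Proposition~1.7]{ARB} that these are finitely $n$-presented, and then apply $\operatorname{Hom}_R(-,M)$ (resp.\ $-\otimes_R M$) to each piece. Your write-up is more explicit about the splicing, but the underlying strategy is identical.
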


\begin{proof}
Assume that $C_j= \operatorname{Coker}(X_{j- 1}\longrightarrow X_{j})$ for all $j\geq 1$. Then there exist short exact sequences
\[0\longrightarrow X_0\longrightarrow X_1\longrightarrow C_1\longrightarrow 0\]
and
\[0\longrightarrow C_{j- 1}\longrightarrow X_{j}\longrightarrow C_{j}\longrightarrow 0,\]
for all $j\geq 2$. Thus $C_j$ is finitely $n$-presented for all $j\geq 1$ from \cite[Proposition 1.7(1)]{ARB} and using an induction argument on $j$. Now, by applying the functor $\operatorname{Hom}_{R}(-, M)$ (resp. $-\otimes_{R} M$) to the above exact sequences, the assertion follows.
\end{proof}


\begin{definition}\label{2-1}
 
{\rm The \textit{$fp_n$-injective dimension}  of an $S$-module $M$ is defined that ${fp_n}.{\rm id}_{S}(M)\leq k$ if and only if there exists an exact sequence
\[0\longrightarrow M\longrightarrow I_0 \longrightarrow I_1\longrightarrow \cdots\longrightarrow I_{k- 1}\longrightarrow I_k\longrightarrow 0\]
of $S$-modules with each $I_i\in fp_nI(S)$ for all $0\leq i\leq k$. Also, the \textit{$fp_n$-flat dimension}  of an $R$-module $N$ is defined that ${fp_n}.{\rm fd}_{R}(N)\leq k$  if and only if there exists an exact sequence
\[0\longrightarrow F_k\longrightarrow F_{k- 1}\longrightarrow \cdots\longrightarrow F_1\longrightarrow F_0\longrightarrow N\longrightarrow 0\]
of $R$-modules with each $F_i\in fp_nF(R)$  for all $0\leq i\leq k$.}
\end{definition}

 It is clear that   ${fp_n}.{id}_{S}(M)\leq 0$ if and only if $M$ is  an $fp_n$-injective $S$-module, and  ${fp_n}.{\rm fd}_{R}(N)\leq 0$  if and only if $N$ is   an $fp_n$-flat  $R$-module. \\
For convenience, we set
\begin{itemize}
 \item $fp_{n}I(S)_{\leq k}=$ the class of $S$-modules of $fp_n$-injective dimension at most $k$.
\item  $fp_{n}F(R)_{\leq k}=$ the class of $R$-modules of $fp_n$-flat dimension at most $k$.
\end{itemize}

In the next lemma, we show that the Bass class $\mathcal{B}_C(S)$ contains all  $S$-modules with  finite $fp_n$-injective dimension and the Auslander class $\mathcal{A}_C(R)$ contains all  $R$-modules with finite $fp_n$-flat dimension.

\begin{lemma}\label{3-5}
Let $C= {}_SC_R$ be a faithfully semidualizing bimodule. Then the following assertions hold true:
\begin{itemize}
\item[\emph{(i)}] $fp_nI(S)_{\leq k}\subseteq \mathcal{B}_{C}(S)$;
\item[\emph{(ii)}] $fp_nF(R)_{\leq k}\subseteq \mathcal{A}_{C}(R)$.
\end{itemize}
\end{lemma}

\begin{proof}
(i). First, we show that for $k=0$, $fp_nI(S)\subseteq \mathcal{WI}(S)$. Cosider $\mathcal{Y}$-finitely presented $\mathcal{Y}=\cdots\longrightarrow F_j\longrightarrow F_{j- 1}\longrightarrow\cdots\longrightarrow F_1\longrightarrow F_0\longrightarrow U\longrightarrow 0$ of $S$-modules. Then there is an exact sequence $0\longrightarrow K_{0}\longrightarrow F_0\longrightarrow U\longrightarrow 0,$ where $K_{0}$, $F_{0}$ and $U$ are finitely $n$-finitely presented. So if $X\in fp_nI(S)$, then by Lemma \ref{3-4}, ${\rm Hom}_{S}(F_{0},X)\longrightarrow {\rm Hom}_{S}(K_{0},X)\longrightarrow 0$ is exact. Hence by Lemma \ref{3-3}, ${\rm Ext}_{S}^{1}(U,X)=0$, and then $X\in\mathcal{WI}(S)$. Consequently, $fp_nI(S)\subseteq \mathcal{B}_{C}(S)$ from \cite[Theorem 2.2]{Z.TT}. So for $M\in fp_nI(S)_{\leq k}$ there exists an exact sequence \[0\longrightarrow M\longrightarrow X_0 \longrightarrow X_1\longrightarrow \cdots\longrightarrow X_{k- 1}\longrightarrow X_k\longrightarrow 0\] of $S$-modules, where each $X_{i}\in\mathcal{B}_{C}(S)$ for all $0\leq i\leq k$. Then by \cite[Corollary 6.3]{HW}, we deduec that $M\in \mathcal{B}_{C}(S)$.

(ii). Let $N\in fp_nF(R)_{\leq k}$. Then there is an exact sequence \[0\longrightarrow F_k\longrightarrow F_{k- 1}\longrightarrow \cdots\longrightarrow F_1\longrightarrow F_0\longrightarrow N\longrightarrow 0\]
of $R$-modules with each $F_i\in fp_nF(R)$  for all $0\leq i\leq k$. Then by \cite[Lemma 3.53]{Rot2}, \[0\longrightarrow N^{*}\longrightarrow F^{*}_0 \longrightarrow F^{*}_1\longrightarrow \cdots\longrightarrow F^{*}_{k- 1}\longrightarrow F^{*}_k\longrightarrow 0\] of $R^{op}$-modules, where each $F^{*}_{i}\in fp_nI(R^{op})$ by \cite[Proposition 2.4(2)]{JZW}. By (i), $F^{*}_{i}\in\mathcal{B}_{C}(R^{op})$, and then \cite[Corollary 6.3]{HW} and Proposition \ref{3-5-A-B}, we deduec that $N^{*}\in \mathcal{B}_{C}(R^{op})$ if and only if $N\in\mathcal{A}_{C}(R)$.
\end{proof}

\section{$C$-$fp_{n}$-injective and $C$-$fp_{n}$-flat modules}\label{3}





\begin{definition}\label{3-1}
\emph{An $R$-module is called \textit{$C$-$fp_n$-injective} if it has the form $\operatorname{Hom}_{S}(C, X)$ for some  $X\in fp_nI(S)$. An $S$-module is called  \textit{$C$-$fp_n$-flat} if it has the form $C\otimes_{R} Y$ for some $Y\in fp_nF(R)$. We denote the class of $C$-$fp_n$-injective $R$-modules by $C{fp}_nI(R)$ and the class of $C$-$fp_n$-flat $S$-modules by $C{fp}_nF(S)$. Therefore
\[C{fp}_nI(R)= \{\operatorname{Hom}_{S}(C, X ): X\in fp_nI(S)\}\]
and
\[C{fp}_nF(S)= \{C\otimes_{R} Y: Y\in fp_nF(R)\}.\]}
\end{definition}


\begin{remark}\label{3-2}
\begin{itemize}
\item[\emph{(i)}] Every $C$-$FP_n$-injective $($resp. $C$-$FP_n$-flat$)$ module is $C$-$fp_m$-injective $($resp. $C$-$fp_m$-flat$)$ module for any $m\geq n$ $($see \emph{\cite[Proposition 1.7(1)]{ARB}}$)$. But not conversely,  see Example \ref{2-1-2}.

\item[\emph{(ii)}] Over $n$-coherent rings, every $C$-$fp_n$-injective $($resp. $C$-$fp_n$-flat$)$ module is also $C$-$FP_m$-injective $($resp. $C$-$FP_m$-flat$)$ module for any $m\geq n$ $($see \emph{Proposition \ref{2-2}}$)$;

\item[\emph{(iii)}] Every $C$-$fp_n$-injective $($resp. $C$-$fp_n$-flat$)$ module is $C$-$fp_m$-injective $($resp. $C$-$fp_m$-flat$)$ for all $m\geq n$, and so we have
\[C{fp}_1I(R)\subseteq C{fp}_2I(R)\subseteq \cdots\subseteq C{fp}_nI(R)\subseteq C{fp}_{n+1}I(R)\subseteq \cdots\]
and
\[C{fp}_1F(S)\subseteq C{fp}_2F(S)\subseteq \cdots\subseteq C{fp}_nF(S)\subseteq C{fp}_{n+1}F(S)\subseteq \cdots.\]

\item[\emph{(iv)}]  An $R$-module $M$ (resp. $S$-module) is $C$-$fp_{\infty}$-injective (resp.  $C$-$fp_{\infty}$-flat) if and only if weak injective (resp. weak flat). 
\end{itemize}
\end{remark}
Recall that a ring $R$ is said to be an \textit{$(n, 0)$-ring} or \textit{$n$-regular ring} if every finitely $n$-presented $R$-module is projective (see \cite{L.g,.TZ}).
\begin{example}\label{2-1-2}
{\rm Let $K$ be a field, $E$ a $K$-vector space with infinite rank, and $A$ a Noetherian ring of global dimension $0$. Set $B= K\ltimes E$ the trivial extension of $K$ by $E$ and $R= A\times B$ the direct product of $A$ and $B$. By \cite[Theorem 3.4(3)]{L.g}, $R$ is a $(2, 0)$-ring which is not a $(1, 0)$-ring. Thus, for every $R$-module $M$ and every finitely $2$-presented $R$-module $L$, $\operatorname{Ext}_{R}^{1}(L, M)= 0$ (resp. $\operatorname{Tor}_{1}^{R}(L, M)= 0$) . Hence every $R$-module is $FP_2$-injective (resp. $FP_2$-flat), and so every $R$-module is $fp_2$-injective (resp. $fp_2$-flat). On the other hand, there exists an $R$-module which is not $FP_1$-injective (resp. $FP_{1}$-flat), since  if every $R$-module is $FP_1$-injective (resp. $FP_{1}$-flat), \cite[Theorem 3.9]{.TZ} implies that  $R$ is  $(1, 0)$-ring, contradiction. Therefore,  if $C=R=S$, then every $R$-module is $C$-$fp_2$-injective and $C$-$fp_2$-flat, and there exists an $R$-module which is not $C$-$FP_1$-injective (resp. $C$-$FP_{1}$-flat).}
\end{example}

\begin{definition}\label{4-1}
\emph{Let $C= {}_SC_R$ be a faithfully semidualizing bimodule. The \textit{$C$-$fp_n$-injective dimension}  of an $R$-module $M$ is defined that ${Cfp_n}.{\rm id}_{R}(M)\leq k$ if and only if there exists an exact sequence
\[0\longrightarrow M\longrightarrow I_0 \longrightarrow I_1\longrightarrow \cdots\longrightarrow I_{k- 1}\longrightarrow I_k\longrightarrow 0\]
of $R$-modules with each $I_i\in Cfp_nI(R)$ for all $0\leq i\leq k$. Also, the \textit{$C$-$fp_n$-flat dimension}  of an $S$-module $N$ is defined that ${Cfp_n}.{\rm fd}_{S}(N)\leq k$  if and only if there exists an exact sequence
\[0\longrightarrow F_k\longrightarrow F_{k- 1}\longrightarrow \cdots\longrightarrow F_1\longrightarrow F_0\longrightarrow N\longrightarrow 0\]
of $S$-modules with each $F_i\in Cfp_nF(S)$  for all $0\leq i\leq k$.}
\end{definition}

It is clear that ${Cfp_n}.{\rm id}_{R}(M)\leq 0$  if and only if $M$ is a $C$-$fp_n$-injective $R$-module, and ${Cfp_n}.{\rm fd}_{S}(N)\leq 0$  if and only if $N$ is a $C$-$fp_n$-flat $S$-module.

For convenience, we set
\begin{itemize}
\item $Cfp_{n}I(R)_{\leq k}=$ the class of $R$-modules of $C$-$fp_n$-injective dimension at most $k$. 
\item $Cfp_{n}F(S)_{\leq k}=$ the class of $S$-modules of $C$-$fp_n$-flat dimension at most $k$. 
\end{itemize}


The following lemma is needed in the proof of the first main result of this section.

\begin{lemma}\label{4-2}
 Then the following assertions hold true:
\begin{itemize}
\item[\emph{(i)}] $Cfp_{n}I(R)_{\leq k}\subseteq \mathcal{A}_{C}(R)$;
\item[\emph{(ii)}] $Cfp_{n}F(S)_{\leq k}\subseteq \mathcal{B}_{C}(S)$.
\end{itemize}
\end{lemma}

\begin{proof}
(i).  Assume that $N\in Cfp_nI(R)$. Then $N= \operatorname{Hom}_{S}(C, X )$ for some $X\in fp_nI(S)$. By  Lemma \ref{3-5}(i), $X\in \mathcal{B}_{C}(S)$ and so $N\in \mathcal{A}_{C}(R)$ from \cite[Lemma 2.9(1)]{Z.TT}. Now, if $M\in Cfp_{n}I(R)_{\leq k}$, then there exists an exact sequence
\[0\longrightarrow M\longrightarrow I_0 \longrightarrow I_1\longrightarrow \cdots\longrightarrow I_{k- 1}\longrightarrow I_k\longrightarrow 0\]
of $R$-modules with each $I_i\in Cfp_nI(R)$  for all $0\leq i\leq k$, and also any $I_i\in \mathcal{A}_{C}(R)$. Hence by \cite[Corollary 6.3]{HW}, $M\in \mathcal{A}_{C}(R)$. 

(ii). This is similar to the first part.
\end{proof}



In the following, we investigate Foxby equivalence relative to the classes $Cfp_{n}I(R)$ and $Cfp_nF(S)$ as a generalization of Foxby equivalence relative to the classes $\mathcal{FI}^{n}_C(R)$ and $\mathcal{FF}^{n}_C(S)$ in \cite{WGZ}. 
\begin{proposition}\label{4-3}
 Then we have the following equivalences of categories:
\begin{itemize}
\item[\emph{(i)}]
$\xymatrix@C=3cm{Cfp_nI(R)_{\leq k}\ar@<1ex>[r]^{C\otimes_{R} -}_{\sim}&\ar@<1ex>[l]^{\operatorname{Hom}_{S}(C, -)}fp_{n}I(S)_{\leq k}};$
\item[\emph{(ii)}]
$\xymatrix@C=3cm{fp_nF(R)_{\leq k}\ar@<1ex>[r]^{C\otimes_{R} -}_{\sim}&\ar@<1ex>[l]^{\operatorname{Hom}_{S}(C, -)}Cfp_{n}F(S)_{\leq k}}.$
\end{itemize}
\end{proposition}

\begin{proof}
(i). Let $M\in Cfp_{n}I(R)_{\leq k}$. There exists an exact sequence
\[0\longrightarrow M\longrightarrow I_0 \longrightarrow I_1\longrightarrow \cdots\longrightarrow I_{k- 1}\longrightarrow I_k\longrightarrow 0\]
of $R$-modules with each $I_i\in Cfp_nI(R)$ for all $0\leq i\leq k$. Thus, $I_i= \operatorname{Hom}_{S}(C, X )$ for some $X\in fp_nI(S)$. By  Lemma \ref{3-5}(i), $X\in \mathcal{B}_{C}(S)$, and then $C\otimes_{R}\operatorname{Hom}_{S}(C, X )\cong X$. So  $C\otimes_{R} I_i\in fp_nI(S)$ and also by Lemma \ref{4-2}(i), $I_i\in \mathcal{A}_{C}(R)$, and so $\operatorname{Tor}_{j}^{R}(C, I_i)= 0$ for all $j\geq 1$. By Lemma \ref{4-2}(i), $M\in \mathcal{A}_{C}(R)$ and hence $\operatorname{Tor}_{j}^{R}(C, M)= 0$ for all $j\geq 1$. Therefore, by applying the functor $C\otimes_{R} -$ to the above exact sequence, we obtain the exact sequence
\[0\longrightarrow C\otimes_{R} M\longrightarrow C\otimes_{R} I_0 \longrightarrow C\otimes_{R} I_1\longrightarrow \cdots\longrightarrow C\otimes_{R} I_{k- 1}\longrightarrow C\otimes_{R} I_k\longrightarrow 0\]
of $S$-modules which shows that $C\otimes_{R} M\in fp_{n}I(S)_{\leq k}$.
Now, let $N\in fp_{n}I(S)_{\leq k}$. There exists an exact sequence
\[0\longrightarrow N\longrightarrow I'_0 \longrightarrow I'_1\longrightarrow \cdots\longrightarrow I'_{k- 1}\longrightarrow I'_k\longrightarrow 0\]
of $S$-modules with each $I'_i\in fp_nI(S)$ for all $0\leq i\leq k$. For all $0\leq i\leq k$, from Lemma \ref{3-5}(i), $I'_i\in \mathcal{B}_{C}(S)$, and so $\operatorname{Ext}_{S}^{j}(C, I'_i)= 0$ for all $j\geq 1$. Also, by Lemma \ref{3-5}(i), $N\in \mathcal{B}_{C}(S)$ and hence $\operatorname{Ext}_{S}^{j}(C, N)= 0$ for all $j\geq 1$. Therefore, by applying the functor $\operatorname{Hom}_{S}(C, -)$ to the above exact sequence, we obtain the exact sequence
\[0\rightarrow \operatorname{Hom}_{S}(C, N)\rightarrow \operatorname{Hom}_{S}(C, I'_0) \rightarrow \operatorname{Hom}_{S}(C, I'_1)\rightarrow \cdots\rightarrow \operatorname{Hom}_{S}(C, I'_{k- 1})\rightarrow \operatorname{Hom}_{S}(C, I'_k)\rightarrow 0\]
of $R$-modules which shows that $\operatorname{Hom}_{S}(C, N)\in Cfp_nI(R)_{\leq k}$. Note that, if $M\in Cfp_{n}I(R)_{\leq k}$, then by Lemma \ref{4-2}(iii), $M\in \mathcal{A}_{C}(R)$, and if $N\in fp_{n}I(S)_{\leq k}$, then from Lemma \ref{3-5}(i), $N\in \mathcal{B}_{C}(S)$. Hence we have the natural isomorphisms $M\cong \operatorname{Hom}_{S}(C, C\otimes_{R} M)$ and $C\otimes_{R} \operatorname{Hom}_{S}(C, N)\cong N$.

(ii). This is similar to that of (i).
\end{proof}
\begin{corollary}\label{4-4}
Let $C= {}_SC_R$ be a semidualizing bimodule. Then we have the following equivalences of categories:
\begin{itemize}
\item[\emph{(i)}]
$\xymatrix@C=3cm{Cfp_nI(R)\ar@<1ex>[r]^{C\otimes_{R} -}_{\sim}&\ar@<1ex>[l]^{\operatorname{Hom}_{S}(C, -)}fp_{n}I(S);}$
\item[\emph{(ii)}]
$\xymatrix@C=3cm{fp_nF(R)\ar@<1ex>[r]^{C\otimes_{R} -}_{\sim}&\ar@<1ex>[l]^{\operatorname{Hom}_{S}(C, -)}Cfp_{n}F(S).}$
\end{itemize}
\end{corollary}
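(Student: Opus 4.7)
The plan is to deduce Corollary \ref{4-4} as essentially the $k=0$ case of Proposition \ref{4-3}, while noting that the hypothesis ``faithfully semidualizing'' is not needed for this case. The key observation is that everything required for the case $k=0$ is already supplied by Theorem \ref{3-6} and Lemma \ref{3-5}, neither of which invokes the faithfully semidualizing condition.

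For part (i), I would first verify that the two functors restrict appropriately. If $M \in Cfp_nI(R)$, then Theorem \ref{3-6}(i) gives directly $C \otimes_R M \in fp_nI(S)$, so $C \otimes_R -$ sends $Cfp_nI(R)$ into $fp_nI(S)$. Conversely, if $N \in fp_nI(S)$, then Lemma \ref{3-5}(i) yields $N \in \mathcal{B}_C(S)$, so by the classical Foxby equivalence (\cite[Proposition 4.1]{HW}) we have $\operatorname{Hom}_S(C,N) \in \mathcal{A}_C(R)$ together with the natural isomorphism $C \otimes_R \operatorname{Hom}_S(C,N) \cong N$, which lies in $fp_nI(S)$ by assumption; hence the other direction of Theorem \ref{3-6}(i) forces $\operatorname{Hom}_S(C,N) \in Cfp_nI(R)$.

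To complete the equivalence, I would observe that the unit and counit of adjunction restrict to natural isomorphisms on the given subcategories: for any $M \in Cfp_nI(R)$, Theorem \ref{3-6}(i) puts $M$ into $\mathcal{A}_C(R)$, so $\mu_M$ is an isomorphism; for any $N \in fp_nI(S)$, Lemma \ref{3-5}(i) puts $N$ into $\mathcal{B}_C(S)$, so $\nu_N$ is an isomorphism. These are precisely the natural transformations witnessing the Foxby equivalence, and their restrictions establish the claim.

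For part (ii), the argument is symmetric: use Theorem \ref{3-6}(ii) to see that $\operatorname{Hom}_S(C,-)$ sends $Cfp_nF(S)$ into $fp_nF(R)$, and use Lemma \ref{3-5}(ii) (giving $fp_nF(R) \subseteq \mathcal{A}_C(R)$) combined with the reverse direction of Theorem \ref{3-6}(ii) to see that $C \otimes_R -$ sends $fp_nF(R)$ into $Cfp_nF(S)$; the natural isomorphisms again come from the Foxby equivalence restricted to the appropriate containments. There is no real obstacle here---the result is a clean corollary of the characterizations in Theorem \ref{3-6} together with the containments in Lemma \ref{3-5}---the only subtle point is simply noticing that faithful semidualizingness is unnecessary at $k=0$, so the hypothesis can be weakened relative to Proposition \ref{4-3}.
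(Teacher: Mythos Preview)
Your proposal is correct and matches the paper's approach: the paper's proof is the single line ``Put $k=0$ in Proposition~\ref{4-3},'' and you have essentially unpacked that instruction, making explicit why the faithfully semidualizing hypothesis of Proposition~\ref{4-3} is not needed when $k=0$ (namely, because Lemma~\ref{4-2} reduces to Lemma~\ref{3-5} and Theorem~\ref{3-6} in that case). Your write-up is in fact more careful than the paper's own proof on this point.
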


\begin{proof}
Put $k= 0$ in Proposition \ref{4-3}.
\end{proof}


By using Lemma \ref{4-2}, Proposition \ref{4-3}, and Corollary \ref{4-4}, we get the first main result of this section.

\begin{theorem}\emph{(Foxby Equivalence)}\label{4-5}
 Then we have the following equivalences of categories:
\[\xymatrix@C=3cm{
fp_nF(R)\ar@<1ex>[r]^{C\otimes_{R} -}_{\sim}\ar@{^{(}->}[d]&\ar@<1ex>[l]^{\operatorname{Hom}_{S}(C, -)}Cfp_nF(S)\ar@{^{(}->}[d]\\
fp_nF(R)_{\leq k}\ar@<1ex>[r]^{C\otimes_{R} -}_{\sim}\ar@{^{(}->}[d]&\ar@<1ex>[l]^{\operatorname{Hom}_{S}(C, -)}Cfp_nF(S)_{\leq k}\ar@{^{(}->}[d]\\
\mathcal{A}_{C}(R)\ar@<1ex>[r]^{C\otimes_{R} -}_{\sim}& \ar@<1ex>[l]^{\operatorname{Hom}_{S}(C, -)}\mathcal{B}_{C}(S)\\
Cfp_{ n}I(R)_{\leq k}\ar@<1ex>[r]^{C\otimes_{R} -}_{\sim}\ar@{^{(}->}[u]&\ar@ <1ex>[l]^{\operatorname{Hom}_{S}(C, -)}fp_nI(S)_{\leq k}\ar@{^{(}->}[u]\\
Cfp_{ n}I(R)\ar@<1ex>[r]^{C\otimes_{R} -}_{\sim}\ar@{^{(}->}[u]&\ar@<1ex>[l]^{\operatorname{Hom}_{S}(C, -)}fp_nI(S)\ar@{^{(}->}[u].
}\]
\end{theorem}


We are now ready to state and prove the first main result of Theorem \ref{4-5} (Foxby Equivalence).
\begin{corollary}\label{4-6}
Let $M$ be an $R$-module, and $N$ an $S$-module. Then the following assertions hold true:
\begin{itemize}
\item[\emph{(i)}] $M\in Cfp_nI(R)_{\leq k}$ if and only if $M\in \mathcal{A}_{C}(R)$ and $C\otimes_{R} M\in fp_nI(S)_{\leq k}$;
\item[\emph{(ii)}] $N\in Cfp_nF(S)_{\leq k}$ if and only if $N\in \mathcal{B}_{C}(S)$ and $\operatorname{Hom}_{S}(C, N)\in fp_nF(R)_{\leq k}$.
\end{itemize}
\end{corollary}

\begin{proof}
(i). $(\Rightarrow)$ This follows from Lemma \ref{4-2}(i) and Theorem \ref{4-5}.

$(\Leftarrow)$ If $M\in \mathcal{A}_{C}(R)$ and $C\otimes_{R} M\in fp_nI(S)_{\leq k}$, then $M\cong \operatorname{Hom}_{S}(C, C\otimes_{R} M)$ and, by Theorem \ref{4-5}, $\operatorname{Hom}_S(C, C\otimes_{R} M)\in Cfp_nI(R)_{\leq k}$. Thus $M\in Cfp_nI(R)_{\leq k}$.

(ii). This is similar to the first part.
\end{proof}

\begin{corollary}\label{3-7}
Let $X$ be an $S$-module and $Y$ an $R$-module. Then the following statements hold true:
\begin{itemize}
\item[\emph{(i)}]  $\operatorname{Hom}_{S}(C, X)\in Cfp_nI(R)_{\leq k}$ if and only if $X\in fp_nI(S)_{\leq k}$.
\item[\emph{(ii)}] $C\otimes_{R} Y\in Cfp_nF(S)_{\leq k}$ if and only if $Y\in fp_nF(R)_{\leq k}$;
\end{itemize}
\end{corollary}

\begin{proof}
(i). Let $\operatorname{Hom}_{S}(C, X)\in Cfp_nI(R)_{\leq k}$. Then, by Corollary \ref{4-6}(i), $\operatorname{Hom}_{S}(C, X)\in \mathcal{A}_{C}(R)$. Therefore, from \cite[Lemma 2.9(1)]{Z.TT}, $X\in \mathcal{B}_{C}(S)$ and hence $C\otimes_{R} \operatorname{Hom}_{S}(C, X)\cong X$. Thus $X\in fp_nI(S)_{\leq k}$ by Theorem \ref{4-5}.

(ii). This is similar to that of (i).
\end{proof}


In the course of the remaining parts of the paper, we denote the character module of $M$ by $M^{*}:= \operatorname{Hom}_{\mathbb{Z}}(M, {\mathbb{Q}}/{\mathbb { Z}})$ \cite[Page 135]{Rot2}.

\begin{proposition}\label{3-8}
Let $M$ be an $R$-module and $N$ an $S$-module. Then the following statements hold:
\begin{itemize}
\item[\emph{(i)}] $M\in Cfp_nI(R)_{\leq k}$ if and only if $M^*\in Cfp_nF(R^{op})_{\leq k}$;
\item[\emph{(ii)}] $N\in Cfp_nF(S)_{\leq k}$ if and only if $N^*\in Cfp_nI(S^{op})_{\leq k}$.
\end{itemize}
\end{proposition}

\begin{proof}
(i).  Assume that $M\in Cfp_nI(R)_{\leq k}$. We proceed by induction on $k$. $(\Rightarrow)$ If $k=0$, then $M= \operatorname{Hom}_{S}(C, X )$ for some  $X\in fp_nI(S)$. From \cite[Proposition 2.4(1)]{JZW}, $X^*\in fp_nF(S^{op})$. Thus $M^*\in Cfp_nF(R^{op})$ because $M^{*}= \operatorname{Hom}_{S}(C, X)^{*}\cong C\otimes_{S^{op}} X^*$ by \cite[Lemma 3.55 and Proposition 2.56]{Rot2}. $(\Leftarrow)$ Assume that $M^*\in Cfp_nF(R^{op})$. Then, from Corollary \ref{4-6}(ii), $M^*\in \mathcal{B}_{C}(R^{op})$ and $\operatorname{Hom}_{R^{op}}(C, M^*)\in fp_nF(S^{op})$. Also, by \cite[Proposition 2.56 and Theorem 2.76]{Rot2}, $(C\otimes_{R} M)^*\cong \operatorname{Hom}_{R^{op}}(C, M^*)$ and so $C\otimes_{R} M\in fp_nI(S)$ from \cite[Proposition 2.4(1)]{JZW}. Since $M^*\in \mathcal{B}_{C}(R^{op})$, $M^*\cong C\otimes_{S^{op}} \operatorname{Hom}_{R^{op}}(C, M^*)\cong C\otimes_{S^{op}} (C\otimes_{R} M)^*\cong \operatorname{Hom}_{S}(C, C\otimes_{R} M)^*$ from \cite[Proposition 2.56, Theorem 2.76, and Lemma 3.55]{Rot2}. Hence $M\cong \operatorname{Hom}_{S}(C, C\otimes_{R} M)$ by \cite[Lemma 3.53]{Rot2}. Thus $M\in Cfp_nI(R)$.

Assume that $M\in Cfp_nI(R)_{\leq k}$. Then there exists an exact sequence
\[0\longrightarrow M\longrightarrow Y\longrightarrow L\longrightarrow0,\] where $Y\in Cfp_nI(R)$ and $L\in Cfp_nI(R)_{\leq k-1}$. Since $Y^*\in Cfp_nF(R^{op})$, and by \cite[Lemma 3.53]{Rot2}, \[0\longrightarrow L^*\longrightarrow Y^*\longrightarrow M^*\longrightarrow0\]  is an exact sequence, we deduce that $M\in Cfp_nI(R)_{\leq k}$ if and only if $L\in Cfp_nI(R)_{\leq k-1}$ if and only if $L^*\in Cfp_nF(R^{op})_{\leq k-1}$ if and only if $M^*\in Cfp_nF(R^{op})_{\leq k}$.

(ii). This is similar to the first part.
\end{proof}
\begin{corollary}\label{3-9}
Let $M$ be an $R$-module and $N$ an $S$-module. Then the following assertions hold:
\begin{itemize}
\item[\emph{(i)}] $M\in Cfp_nI(R)_{\leq k}$ if and only if $M^{**}\in Cfp_nI(R)_{\leq k}$;
\item[\emph{(ii)}] $N\in Cfp_nF(S)_{\leq k}$ if and only if $N^{**}\in Cfp_nF(S)_{\leq k}$.
\end{itemize}
\end{corollary}

\begin{proof}
This follows by Proposition \ref{3-8}.
\end{proof}


In the next proposition, we prove that the classes  $Cfp_nI(R)_{\leq k}$ and $Cfp_nF(S)_{\leq k}$ are closed under direct summands, direct products, and direct sums.

\begin{proposition}\label{3-10}
The following assertions hold:
\begin{itemize}
\item[\emph{(i)}] The class $Cfp_nI(R)_{\leq k}$ is closed under direct summands, direct products, and direct sums;
\item[\emph{(ii)}] The class $Cfp_nF(S)_{\leq k}$ is closed under direct summands, direct products, and direct sums.
\end{itemize}
\end{proposition}

\begin{proof}
(i). Let $M\in Cfp_nI(R)_{\leq k}$ and let $M'$ be a summand of $M$. Then, by Corollary \ref{4-6}(i), $M\in \mathcal{A}_{C}(R)$ and $C\otimes_{R} M\in fp_nI(S)_{\leq k}$,  and also there exists an $R$-module $M''$ such that $M\cong M'\oplus M''$. From \cite[Proposition 4.2(a)]{HW}, it follows that $M'\in \mathcal{A}_{C}(R)$. Also, by \cite[Theorem 2.65]{Rot2}, we have $C\otimes_{R} M\cong (C\otimes_{R} M')\oplus (C\otimes_{R} M'')$ which shows from \cite[Proposition 2.3(1)]{JZW} that $C\otimes_{R} M'\in fp_nI(S)_{\leq k}$. Thus $M'\in Cfp_nI(R)_{\leq k}$  by Corollary \ref{4-6}(i). \\
Now, let $\{M_j\}_{j\in J}$ be a family of  $R$-modules of $C$-$fp_n$-injective dimension at most $k$.  Then, by Corollary \ref{4-6}(i), $M_j\in \mathcal{A}_{C}(R)$ and $C\otimes_{R} M_j\in fp_nI(S)_{\leq k}$ for all $j\in J$. Hence, from \cite[Proposition 4.2(a)]{HW}, $\prod_{j\in J} M_j\in \mathcal{A}_{C}(R)$ (resp. $\bigoplus_{j\in J} M_j\in \mathcal{A}_{C}(R)$). On the other hand, there exists an exact sequence
\[0\longrightarrow C\otimes_{R}M_j\longrightarrow I_{0j} \longrightarrow I_{1j}\longrightarrow \cdots\longrightarrow I_{{k- 1}{j}}\longrightarrow I_{kj}\longrightarrow 0\]
of $S$-modules with each $I_{ij}\in  fp_nI(S)$ for all $0\leq i\leq k$. So we have the exact sequence
\[0\longrightarrow \prod_{j\in J}(C\otimes_{R}M_j)\longrightarrow \prod_{j\in J}I_{0j} \longrightarrow \prod_{j\in J}I_{1j}\longrightarrow \cdots\longrightarrow \prod_{j\in J}I_{{k- 1}{j}}\longrightarrow \prod_{j\in J}I_{kj}\longrightarrow 0\]
of $S$-modules, where by \cite[Proposition 2.3(1)]{JZW},  $\prod_{j\in J}I_{ij}\in  fp_nI(S)$ for all $0\leq i\leq k$, and so $\prod_{j\in J}(C\otimes_{R}M_j)\in fp_nI(S)_{\leq k}$. Similarly, 
 $\bigoplus_{j\in J} (C\otimes_{R} M_j)\in fp_nI(S)_{\leq k}$.  Since $C$ is finitely presented, from \cite[Lemma 2.10(2)]{NDING} we have  $C\otimes_{R} (\prod_{j\in J} M_j)\cong \prod_{j\in J}(C\otimes_{R}M_j)$, and then  $C\otimes_{R} (\prod_{j\in J} M_j)\in fp_nI(S)_{\leq k}$.  Also, $C\otimes_{R} (\bigoplus_{j\in J} M_j)\in fp_nI(S)$  by \cite[Theorem 2.65]{Rot2}. Thus $\prod_{j\in J} M_j\in Cfp_nI(R)_{\leq k}$ (resp. $\bigoplus_{j\in J} M_j\in Cfp_nI(R)_{\leq k}$) by Corollary \ref{4-6}(i).

(ii). By using \cite[Theorem 2.30 and Corollary 2.32]{Rot2} and \cite[Lemma 2.9]{NDING}, the proof is similar to that of (i).
\end{proof}
Let $\mathcal{F}$ be a class of $R$-modules and let $M$ be an $R$-module. A morphism $f: F\longrightarrow M$ (resp. $f: M\longrightarrow F$) with $F\in \mathcal{F}$ is called an \textit{$\mathcal{F}$-precover} (resp. \textit{$\mathcal{F}$-preenvelope}) of $M$ when $\operatorname{Hom}_{R}(F', F)\longrightarrow \operatorname{Hom}_{R}(F', M)\longrightarrow 0$ (resp. $\operatorname{Hom}_{R}(F, F')\longrightarrow \operatorname{Hom}_{R}(M, F')\longrightarrow 0$) is exact for all $F'\in \mathcal{F}$. Assume that $f: F\longrightarrow M$ (resp. $f: M\longrightarrow F$) is an $\mathcal{F}$-precover (resp. $\mathcal{F}$-preenvelope) of $M$. Then $f$ is called an \textit{$\mathcal{F}$-cover} (resp. \textit{$\mathcal{F}$-envelope}) of $M$ if every morphism $g: F\longrightarrow F$ such that $fg = f$ (resp. $gf = f$) is an isomorphism. The class $\mathcal{F}$ is called \textit{$($pre$)$covering} (resp. \textit{$($pre$)$enveloping}) if each $R$-module has an $\mathcal{F}$-(pre)cover (resp. $\mathcal{F}$-(pre)envelope) (see \cite[Definitions 5.1.1 and 6.1.1]{EM}).

A \textit{duality pair} over $R$ is a pair $(\mathcal{M}, \mathcal{N})$, where $\mathcal{M}$ is a class of $R$-modules and $\mathcal{N}$ is a class of $R^{op}$-modules, subject to the following conditions:
\begin{itemize}
\item[(i)] For an $R$-module $M$, one has $M\in \mathcal{M}$ if and only if $M^{*}\in \mathcal{N}$;
\item[(ii)] $\mathcal{N}$ is closed under direct summands and finite direct sums.
\end{itemize}
A duality pair $(\mathcal{M}, \mathcal{N})$ is called \textit{$($co$)$product-closed} if the class $\mathcal{M}$ is closed under (co)products in the category of all $R$-modules (see \cite[Definition 2.1]{HJ}).

\begin{corollary}\label{3-11}
$(Cfp_{n}I(R)_{\leq k}, Cfp_nF(R^{op})_{\leq k})$ and $(Cfp_{n}F(S)_{\leq k}, Cfp_nI(S^{op})_{\leq k})$ are duality pairs.
\end{corollary}

\begin{proof}
By Proposition \ref{3-8}, an $R$-module $M$ (resp. $S$-module $N$) is in $Cfp_{n}I(R)_{\leq k}$ (resp. $Cfp_{n}F(S)_{\leq k}$) if and only if $M^{*}$ (resp. $N^{*}$) is in $Cfp_nF(R^{op})_{\leq k})$ (resp. $Cfp_nI(S^{op})_{\leq k}$). Also, from Proposition \ref{3-10}, $Cfp_nF(R^{op})_{\leq k}$ (resp. $Cfp_nI(S^{op})_{\leq k}$) is closed under direct summands and direct sums. Thus the assertions follow.
\end{proof}


Assume that $M'$ is an $R$-submodule of $M$. We say that $M'$ is a \textit{pure submodule} of $M$, $M/M'$ is a \textit{pure quotient} of $M$, and $M$ is a \textit{pure extension} of $M'$ and $M/M'$ if
\[0\longrightarrow A\otimes_R M'\longrightarrow A\otimes_R M\longrightarrow A\otimes_R M/M'\longrightarrow 0\]
is an exact sequence for all $R^{op}$-modules $A$, equivalently, if
\[0\longrightarrow \operatorname{Hom}_{R}(B, M')\longrightarrow \operatorname{Hom}_{R}(B, M)\longrightarrow \operatorname{Hom}_{R}(B, M/M')\longrightarrow 0\]
is an exact sequence for all finitely $1$-presented $R$-modules $B$ \cite[Definition 5.3.6]{EM}.

Wei and Zhang proved in \cite[Proposition 2.4(2)]{JZW} that the classes $fp_nI(R)$ and $fp_nF(R)$ are closed under pure submodules and pure quotients. The next corollary shows that the classes $Cfp_nI(R)$ and $Cfp_nF(S)$ are also closed under pure submodules, pure quotients, and pure extensions.

\begin{corollary}\label{3-12}
Let $M'$ be a pure submodule of $R$-module $M$ and let $N'$ be a pure submodule of $S$-module $N$. Then the following statements hold true:
\begin{itemize}
\item[\emph{(i)}] $M\in Cfp_nI(R)$ if and only if $M'\in Cfp_nI(R)$ and $M/M'\in Cfp_nI(R)$;
\item[\emph{(ii)}] $N\in Cfp_nF(S)$ if and only if $N'\in Cfp_nF(S)$ and $N/N'\in Cfp_nF(S)$.
\end{itemize}
\end{corollary}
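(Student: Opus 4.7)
The plan is to sidestep direct manipulation of purity and instead transfer the problem across the character-module duality provided by Corollary \ref{3-8}, where it reduces to a closure question already handled by Corollary \ref{3-10}. The classical fact I shall invoke is that a short exact sequence $0\longrightarrow M'\longrightarrow M\longrightarrow M/M'\longrightarrow 0$ of (left) $R$-modules is pure exact if and only if the character-dual sequence $0\longrightarrow (M/M')^{*}\longrightarrow M^{*}\longrightarrow (M')^{*}\longrightarrow 0$ of $R^{op}$-modules is split exact (see e.g.\ \cite[Proposition 5.3.8]{EM}); in particular, purity furnishes an isomorphism $M^{*}\cong (M')^{*}\oplus (M/M')^{*}$ of $R^{op}$-modules.

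For part (i), I would combine this with Corollary \ref{3-8}(i): $M\in Cfp_nI(R)$ precisely when $M^{*}\in Cfp_nF(R^{op})$, and likewise for $M'$ and $M/M'$. By Corollary \ref{3-10}(ii), the class $Cfp_nF(R^{op})$ is closed under direct summands and finite direct sums, so $M^{*}\cong (M')^{*}\oplus (M/M')^{*}$ lies in $Cfp_nF(R^{op})$ if and only if each summand does. Reading this equivalence back through Corollary \ref{3-8}(i) yields exactly the stated biconditional.

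Part (ii) is completely parallel: the pure sequence $0\longrightarrow N'\longrightarrow N\longrightarrow N/N'\longrightarrow 0$ in $S$-mod gives $N^{*}\cong (N')^{*}\oplus (N/N')^{*}$ in $S^{op}$-mod, and then Corollary \ref{3-8}(ii) together with the direct-summand/direct-sum closure of $Cfp_nI(S^{op})$ supplied by Corollary \ref{3-10}(i) closes the argument.

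The only non-trivial ingredient is the character-duality criterion for pure exactness, which I would cite rather than reprove; the remainder of the proof is a short chain of bookkeeping through Corollaries \ref{3-8} and \ref{3-10}, handling both the ``pure submodule/pure quotient'' and the ``pure extension'' directions uniformly via the split decomposition $M^{*}\cong (M')^{*}\oplus (M/M')^{*}$ (resp.\ $N^{*}\cong (N')^{*}\oplus (N/N')^{*}$).
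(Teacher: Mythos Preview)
Your argument is correct. The key step---that a pure exact sequence dualizes to a split exact sequence under the character functor, so $M^{*}\cong (M')^{*}\oplus (M/M')^{*}$---is exactly the mechanism at work, and the chain through Corollaries~\ref{3-8} and~\ref{3-10} is sound in both directions.

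The paper takes a slightly more packaged route: it invokes Corollary~\ref{3-11}, which records that $(Cfp_nI(R), Cfp_nF(R^{op}))$ and $(Cfp_nF(S), Cfp_nI(S^{op}))$ are duality pairs, and then cites \cite[Theorem~3.1]{HJ}, which asserts in general that the left-hand class of any duality pair is closed under pure submodules, pure quotients, and pure extensions. Your proof is essentially an in-line unpacking of the Holm--J{\o}rgensen argument specialized to these classes: the proof of \cite[Theorem~3.1]{HJ} proceeds precisely by dualizing a pure exact sequence to a split one and then using the closure of the right-hand class under summands and finite sums---which is exactly your Corollary~\ref{3-10} step. So the two approaches coincide in substance; the paper's version is terser because it outsources the work to the duality-pair formalism, while yours is self-contained and avoids the extra citation.
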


\begin{proof}
The assertion follows by Corollary \ref{3-11} and \cite[Theorem 3.1]{HJ}.
\end{proof}
In the second main result of this section, by the use of duality pairs, we show that $Cfp_{n}I(R)_{\leq k}$ and $Cfp_{n}F(S)_{\leq k}$ are preenveloping and covering.

\begin{theorem}\label{3-13}
The classes $Cfp_{n}I(R)_{\leq k}$ and $Cfp_{n}F(S)_{\leq k}$ are preenveloping and covering.
\end{theorem}

\begin{proof}
By Corollary \ref{3-11}, $(Cfp_{n}I(R)_{\leq k}, Cfp_nF(R^{op})_{\leq k})$ and $(Cfp_{n}F(S)_{\leq k}, Cfp_nI(S^{op})_{\leq k})$ are duality pairs. Also, from Proposition \ref{3-10}, the classes $Cfp_{n}I(R)_{\leq k}$ and $Cfp_{n}F(S)_{\leq k}$ are closed under direct products and direct sums. Therefore, from \cite[Theorem 3.1]{HJ}, the classes $Cfp_{n}I(R)_{\leq k}$ and $Cfp_{n}F(S)_{\leq k}$ are preenveloping and covering.
\end{proof}
\section{  $C$-$fp_n$-injective and $C$-$fp_n$-flat dimension of modules with respect to change of rings}\label{4}



We assume $S\leq R$ is a unitary ring extension. The ring $S$ is called
right $R$-projective, \cite{WX,TX} in case, for any right $S$-module $M_S$ with an $S$ module $N_S,$ $N_{R}\mid M_{R}$ implies $N_{S}\mid M_{S}$,
where $N\mid M$ means $N$ is a direct summand of $M$. $S$
is called a finite normalizing extension of $R$ if there exist elements $a_{1},\cdots,a_{n}\in S$
such that $a_{1}=1$, $S=Ra_{1}+\cdots+Ra_{n}$.  A finite normalizing extension $S\leq R$
is called an almost excellent extension in case $_RS$ is flat, $S_R$ is projective,
and the ring $S$ is right $R$-projective. An almost excellent extension $S\leq R$ is an excellent extension  in case both $_RS$ and $S_R$ are free modules with a common basis $\{a_{1},\cdots,a_{n}\}$.

In this section, we investigat modules of $C$-$fp_n$-injective dimension at most $k$ and also, modules of  $C$-flat dimension at most $k$ under an almost excellent extension of rings, where $C$ is a faithfully semidualizing $R$-module.

\begin{lemma}\label{1.p}
Let $S\geq R$ be an almost excellent extension. Then the following assertions hold:
\begin{itemize}
\item[\emph{(i)}]
 If $X\in fp_{n}I(R)_{\leq k}$, then ${\rm Hom}_{R}(S,X)\in fp_{n}I(S)_{\leq k}$.
\item[\emph{(ii)}]
 If $X\in fp_{n}F(R)_{\leq k}$, then $(S\otimes_{R}X)\in fp_{n}F(S)_{\leq k}$.
\end{itemize}
\end{lemma} 
\begin{proof}
(i). Consider, the exact sequence $0\longrightarrow  K\longrightarrow L$, where $K$ and $L$ are finitely $n$-presented $S$-modules. By \cite[Theorem 5]{wu}, $K$ and $L$ are finitely $n$-presented $R$-modules. If $k=0$, then $X\in fp_{n}I(R)$. We show that ${\rm Hom}_{R}(S,X)\in fp_{n}I(S)$. We have the commutative diagram
\[
\xymatrix{
&\operatorname{ Hom}_{S}(L,{\rm Hom}_{R}(S,X))\ar[r]\ar[d]^{\cong}&\operatorname{Hom}_{S}(K,{\rm Hom}_{R}(S,X))\ar[d]^{\cong}\\
&\operatorname{ Hom}_{R}(L,X)\ar[r]&\operatorname{Hom}_{R}(K,X)\ar[r]&0,
}
\] 
and so, the sequence $\operatorname{ Hom}_{S}(L,{\rm Hom}_{R}(S,X))\longrightarrow\operatorname{Hom}_{S}(K,{\rm Hom}_{R}(S,X))\longrightarrow 0$ is exact and hence ${\rm Hom}_{R}(S,X)\in fp_{n}I(S)$.

Now, let $X\in fp_{n}I(R)_{\leq k}$. Then there exists an exact sequence
\[0\longrightarrow X\longrightarrow X_0 \longrightarrow X_1\longrightarrow \cdots\longrightarrow X_k\longrightarrow 0\]
of $R$-modules with each $X_i\in fp_nI(R)$ for all $0\leq i\leq k$. Since $S_R$ is projective, there exists an exact sequence
\[0\longrightarrow \operatorname{ Hom}_{R}(S,X)\longrightarrow \operatorname{ Hom}_{R}(S,X_0) \longrightarrow \operatorname{ Hom}_{R}(S,X_1)\longrightarrow \cdots\longrightarrow \operatorname{ Hom}_{R}(S,X_k)\longrightarrow 0\]
of $S$-modules with each $\operatorname{ Hom}_{R}(S,X_i)\in fp_nI(S)$ for all $0\leq i\leq k$. Thus, ${\rm Hom}_{R}(S,X)\in fp_{n}I(S)_{\leq k}$.

(ii). By Definition \ref{2-1} and \cite[Proposition 2.4(1)]{JZW},   it follows that for an  $R$-module $Y$, $Y\in fp_{n}I(R)_{\leq k}$ if and only if $Y^{*}\in fp_{n}F(R^{op})_{\leq k}$ and $Y\in fp_{n}F(R)_{\leq k}$ if and only if $Y^{*}\in fp_{n}I(R^{op})_{\leq k}$. So if $X\in fp_{n}F(R)_{\leq k}$, then $X^{*}\in fp_{n}I(R^{op})_{\leq k}$. Hence by (1) and  \cite[Proposition 2.56 and Theorem 2.76]{Rot2},  $(S\otimes_{R}X)^{*}\cong{\rm Hom}_{R}(S,X^{*})\in fp_{n}I(S^{op})_{\leq k}$, and then $(S\otimes_{R}X)\in fp_{n}F(S)_{\leq k}$. 
\end{proof}

\begin{lemma}\label{1.p1}
Let $S\geq R$ be an almost excellent extension and $C$ a (faithfully) semidualizing $R$-module. Then $C\otimes_{R}S$ is a faithfully semidualizing $S$-module.
\end{lemma} 
\begin{proof}
Let $C$ a faithfully semidualizing $R$-module. Then by \cite[Lemma 3.4]{JZW}, $C\otimes_{R}S$ is a semidualizing $S$-module. Let ${\rm Hom}_{S}(C\otimes_{R}S, N)=0$ for a $S$-module $N$. Then $0={\rm Hom}_{S}(C\otimes_{R}S, N)\cong {\rm Hom}_{R}(C, {\rm Hom}_{S}(C, N))\cong{\rm Hom}_{R}(C, N)$, and so $N=0$.
\end{proof}
\begin{proposition}\label{1.p1pp}
Let $S\geq R$ be an almost excellent extension . Then the following assertions hold true:
\begin{itemize}
\item[\emph{(i)}]
 If $M\in Cfp_{n}I(R)_{\leq k}$, then 
 ${\rm Hom}_{R}(S,M)\in (C\otimes_{R}S)fp_{n}I(S)_{\leq k}$;
\item[\emph{(ii)}]
  If $M\in Cfp_{n}F(R)_{\leq k}$, then 
 $(S\otimes_{R}M)\in (C\otimes_{R}S)fp_{n}F(S)_{\leq k}$.
\end{itemize}
\end{proposition} 
\begin{proof}
(i).  Let $M\in Cfp_{n}I(R)_{\leq k}$. If $k=0$, then  $M={\rm Hom}_R(C,X)$ for some $X\in fp_{n}I(R)$.  We have 
\[\begin{array}{lllllll}
 {\rm Hom}_{R}(S,M)
&\cong {\rm Hom}_{R}(S,{\rm Hom}_R(C,X))\\
&\cong {\rm Hom}_{R}(C\otimes_{R}S,X)\\
&\cong {\rm Hom}_{R}(C\otimes_{R}S\otimes_{S}S,X)\\
&\cong {\rm Hom}_{S}(C\otimes_{R}S,{\rm Hom}(S,X)).\\
\end{array}\]
 Since by  Lemma \ref{1.p}, ${\rm Hom}_{R}(S,X)\in fp_{n}I(S)$ and by Lemma \ref{1.p1}, $C\otimes_{R}S$ is semidualizing $S$-module, we deduce that ${\rm Hom}_{S}(C\otimes_{R}S,{\rm Hom}(S,X))\in(C\otimes_{R}S)fp_{n}I(S)$. So, it follows that 
${\rm Hom}_{R}(S,M)\in (C\otimes_{R}S)fp_{n}I(S)$. 

(ii). This is similar to that of (i).
\end{proof}
 In the following, we give equivalent conditions with modules of $C$-$fp_n$-injective dimension at most $k$ and also, modules of  $C$-$fp_n$-flat dimension at most $k$ under almost excellent extension of rings.

\begin{proposition}\label{1.p10p}
Let $S\geq R$ be an almost excellent extension and $M$ an $S$-module. Then the following assertions are equivalent:
\begin{itemize}
\item[\emph{(i)}]
 $M\in Cfp_{n}I(R)_{\leq k}$;
\item[\emph{(ii)}]
 ${\rm Hom}_{R}(S,M)\in (C\otimes_{R}S)fp_{n}I(S)_{\leq k}$;
\item[\emph{(iii)}]
 $M\in (C\otimes_{R}S)fp_{n}I(S)_{\leq k}$.
\end{itemize}
\end{proposition} 
\begin{proof}
(i)$\Longrightarrow$(ii).  Let $M\in Cfp_{n}I(R)_{\leq k}$. Then by Proposition \ref{1.p1pp}(1), 
 ${\rm Hom}_{R}(S,M)\in (C\otimes_{R}S)fp_{n}I(S)_{\leq k}$. 

(ii)$\Longrightarrow$(iii). By \cite[Lemma 1.1]{WX}, $_SM$ is isomorphic to a direct summand of $S$-module ${\rm Hom}_{R}(S,M)$. Then by (2) and Proposition \ref{3-10}(1), $M\in (C\otimes_{R}S)fp_{n}I(S)_{\leq k}$.

(iii)$\Longrightarrow$ (i). Let $k=0$. Then $M\in (C\otimes_{R}S)fp_{n}I(S)$, and so $M={\rm Hom}_{S}(C\otimes_{R}S, X)$ for some $X\in fp_{n}I(S)$. We have  
$M={\rm Hom}_{S}(C\otimes_{R}S, X)\cong{\rm Hom}_{R}(C, {\rm Hom}_{S} (S,X))\cong{\rm Hom}_{R}(C, X)$. We show that $X\in fp_{n}I(R)$. 
Let $0\longrightarrow  K\longrightarrow L$ 
be an exact sequence of $R$-modules, where $K$ and $L$ are finitely $n$-presented $R$-modules.  Since $S$ is flat $R$-module, we have that 

$0\longrightarrow  K\otimes_{R}S\longrightarrow L\otimes_{R}S$ is an exact sequence of $S$-modules, where $K\otimes_{R}S$ and $K\otimes_{R}S$ are finitely $n$-presented $S$-modules by \cite[Lemma 4]{wu}. We have the commutative diagram
\[
\xymatrix{
&\operatorname{ Hom}_{S}(L\otimes_{R}S,X)\ar[r]\ar[d]^{\cong}&\operatorname{Hom}_{S}(K\otimes_{R}S, X)\ar[r]\ar[d]^{\cong}&0\\
&\operatorname{ Hom}_{R}(L,X)\ar[r]&\operatorname{Hom}_{R}(K,X).&
}
\] 
So, the sequence $\operatorname{ Hom}_{R}(L,X)\longrightarrow\operatorname{Hom}_{R}(K,X)\longrightarrow 0$ is exact, and then $X\in fp_{n}I(R)$.  Therefore, we get that $M\in Cfp_{n}I(R)$.
 Also, if $M\in (C\otimes_{R}S)fp_{n}I(S)_{\leq k}$, it simply follows that $M\in Cfp_{n}I(R)_{\leq k}$.
\end{proof}
\begin{proposition}\label{1.p10}
Let $S\geq R$ be an almost excellent extension and $M$ an $S$-module. Then the following assertions are equivalent:
\begin{itemize}
\item[\emph{(i)}]
 $M\in Cfp_{n}F(R)_{\leq k}$;
\item[\emph{(ii)}]
 $(S\otimes_{R}M)\in (C\otimes_{R}S)fp_{n}F(S)_{\leq k}$;
\item[\emph{(iii)}]
 $M\in (C\otimes_{R}S)fp_{n}F(S)_{\leq k}$.
\end{itemize}
\end{proposition} 
\begin{proof}
 By Propositions \ref{1.p10p} and \ref{3-8} and \cite[Proposition 2.56 and Theorem 2.76]{Rot2}, $M\in Cfp_{n}F(R)_{\leq k}$ if and only if $M^*\in Cfp_{n}I(R^{op})_{\leq k}$ if and only if  ${\rm Hom}_{R}(S,M^{*})\in (C\otimes_{R}S)fp_{n}I(S^{op})_{\leq k}$ if and only if $(S\otimes_{R}M)^{*}\in (C\otimes_{R}S)fp_{n}I(S^{op})_{\leq k}$ if and only if $(S\otimes_{R}M)\in (C\otimes_{R}S)fp_{n}F(S)_{\leq k}$. ALso, $M\in Cfp_{n}F(R)_{\leq k}$ if and only if $M^*\in Cfp_{n}I(R^{op})_{\leq k}$ if and only if $M^{*}\in (C\otimes_{R}S)fp_{n}I(S^{op})_{\leq k}$ if and only if $M\in (C\otimes_{R}S)fp_{n}F(S)_{\leq k}$ .
\end{proof}
\begin{corollary}\label{1.p1000}
Let $S\geq R$ be an almost excellent extension and $R$ an $n$-coherent ring. Then the following assertions hold true:
\begin{itemize}
\item[\emph{(i)}]
 The class $(C\otimes_{R}S)fp_{n}I(S)_{\leq k}$ is closed under extentions and  cokernels of
monomorphisms.
\item[\emph{(ii)}]
The class $(C\otimes_{R}S)fp_{n}F(S)_{\leq k}$ is closed under extentions and kernels of epimorphisms. 
\end{itemize}
\end{corollary} 
\begin{proof}
(i). Consider the exact sequence $0\longrightarrow A\longrightarrow B\longrightarrow C\longrightarrow 0,$ of $S$-modules, where $A$ and $C$ are in  $(C\otimes_{R}S)fp_{n}I(S)_{\leq k}$.  Then by Proposition \ref{1.p10p}, $A$ and $C$ are in  $Cfp_{n}I(R)_{\leq k}$. So by Remark \ref{3-2}(ii) and \cite[Theorem 4.9]{WGZ}, $B$ is in  $Cfp_{n}I(R)_{\leq k}$, and then $B$ is in $(C\otimes_{R}S)fp_{n}I(S)_{\leq k}$ from Proposition \ref{1.p10p}. Similarly, if $B$ and $C$ are in  $(C\otimes_{R}S)fp_{n}I(S)_{\leq k}$, then $A$ is in $(C\otimes_{R}S)fp_{n}I(S)_{\leq k}$.

(ii). This is similar to that of (i) by using Proposition \ref{1.p10} and \cite[Theorem 4.8]{WGZ}.
\end{proof}
\begin{theorem}\label{1.p124}
Let $S\geq R$ be an almost excellent extension. Then the class $(C\otimes_{R}S)fp_{n}I(S)_{\leq k}$ is preenveloping and precovering.
\end{theorem} 
\begin{proof}
Let $M$ is an $S$-module. We show that $M$ has a $(C\otimes_{R}S)fp_{n}I(S)_{\leq k}$-preenvelope. Since $M$ is an $R$-module, then by Theorem \ref{3-13},  
$M$ has a $Cfp_{n}I(R)_{\leq k}$-preenvelope. Let $R$-homomorphism $\alpha:M\longrightarrow N$ be a $Cfp_{n}I(R)_{\leq k}$-preenvelope of $M$.  Then by Proposition \ref{1.p1pp}(1),  ${\rm Hom}_{R}(S,N)\in (C\otimes_{R}S)fp_{n}I(S)_{\leq k}$. We  prove that $\alpha_{*}{\lambda}_{M}:M\longrightarrow{\rm Hom}_{R}(S,N)$ is a $(C\otimes_{R}S)fp_{n}I(S)_{\leq k}$-preenvelope of $S$-module $M$, where $\lambda_{M}:M\longrightarrow{\rm Hom}_{R}(S,M)$ and $\alpha_{*}:{\rm Hom}_{R}(S,M)\longrightarrow{\rm Hom}_{R}(S,N).$  If $L\in(C\otimes_{R}S)fp_{n}I(S)_{\leq k}$, and $\beta:M\longrightarrow L$ is an $S$-homomorphism, then by Proposition \ref{1.p10p}, $L\in Cfp_{n}I(R)_{\leq k}$, and so  there exists $R$-homomorphism $\gamma:N\longrightarrow L$ such that $\beta=\gamma\alpha$. Thus, we have the following commutative diagram:
\[
\xymatrix@C=3cm{
_SM\ar[d]^{\beta}\ar@<1ex>[r]^{\lambda_{M}}_{}&\ar@<1ex>[l]^{\pi_{M}}{\rm Hom}_{R}(S,M)\ar[r]^{\alpha_{*}}\ar[d]^{\beta_{*}}&{\rm Hom}_{R}(S,N)\ar[d]^{1}&\\
_SL\ar@<1ex>[r]^{\lambda_{L}}_{}&\ar@<1ex>[l]^{\pi_{L}}{\rm Hom}_{R}(S,L)&\ar^{\gamma_{*}}[l] {\rm Hom}_{R}(S,N)&.}
\]
So, we have $(\pi_{L}\gamma_{*})(\alpha_{*}\lambda_{M})=\pi_{L}(\gamma_{*}\alpha_{*})\lambda_{M}=\pi_{L}(\gamma\alpha)_{*}\lambda_{M}=\pi_{L}(\beta)_{*}\lambda_{M}=\pi_{L}\lambda_{L}\beta=\beta$. Therefore, we get that every $S$-module $M$ has a $(C\otimes_{R}S)fp_{n}I(S)_{\leq k}$-preenvelope. Similarly, it is proved that the class $(C\otimes_{R}S)fp_{n}I(S)_{\leq k}$ is  precovering.
\end{proof}
\begin{theorem}\label{1.p125}
Let $S\geq R$ be an almost excellent extension. Then the class $(C\otimes_{R}S)fp_{n}F(S)_{\leq k}$ is preenveloping and precovering.
\end{theorem} 
\begin{proof}
Let $M$ is an $S$-module. We show that $M$ has a $(C\otimes_{R}S)fp_{n}F(S)_{\leq k}$-preenvelope. Since $M$ is an $R$-module, then by Theorem \ref{3-13},  
$M$ has a $Cfp_{n}F(R)_{\leq k}$-preenvelope. Let $R$-homomorphism $\alpha:M\longrightarrow N$ be a $Cfp_{n}F(R)_{\leq k}$-preenvelope of $M$.  Then by Proposition \ref{1.p1pp}(2),  $(S\otimes_{R}N)\in (C\otimes_{R}S)fp_{n}F(S)_{\leq k}$. We  prove that $(S\otimes_{R}\alpha){\l}_{M}:M\longrightarrow S\otimes_{R}N$ is a $(C\otimes_{R}S)fp_{n}F(S)_{\leq k}$-preenvelope of $S$-module $M$, where $\l_{M}:M\longrightarrow(S\otimes_{R}M)$ and $S\otimes_{R}\alpha:S\otimes_{R}M\longrightarrow S\otimes_{R}N.$  If $L\in(C\otimes_{R}S)fp_{n}F(S)_{\leq k}$, and $\beta:M\longrightarrow L$ is an $S$-homomorphism, then by Proposition \ref{1.p10}, $L\in Cfp_{n}F(R)_{\leq k}$, and so  there exists $R$-homomorphism $\gamma:N\longrightarrow L$ such that $\beta=\gamma\alpha$. Thus, we have the following commutative diagram:
\[
\xymatrix@C=3cm{
_SM\ar[d]^{\beta}\ar@<1ex>[r]^{\l_{M}}_{}&\ar@<1ex>[l]^{\tau_{M}}S\otimes_{R}M\ar[r]^{S\otimes_{R}\alpha}\ar[d]^{S\otimes_{R}\beta}&S\otimes_{R}N\ar[d]^{1}&\\
_SL\ar@<1ex>[r]^{\l_{L}}_{}&\ar@<1ex>[l]^{\tau_{L}}S\otimes_{R}L&\ar^{S\otimes_{R}\gamma}[l] S\otimes_{R}N&.}
\]
Thus, we have $\tau_{L}(S\otimes_{R}\gamma)(S\otimes_{R}\alpha)\l_{M}=\tau_{L}(S\otimes_{R}\gamma\alpha)\l_{M}=\tau_{L}l_{L}\beta=\beta$, and so every $S$-module $M$ has a $(C\otimes_{R}S)fp_{n}F(S)_{\leq k}$-preenvelope. Similarly, it is proved that the class $(C\otimes_{R}S)fp_{n}F(S)_{\leq k}$ is  precovering.
\end{proof}
\begin{corollary}\label{3-14}
Let $S\geq R$ be an almost excellent extension. Then the following assertions are equivalent:
\begin{itemize}
\item[\emph{(i)}] Every $S$-module has a monic $(C\otimes_{R}S)fp_{n}I(S)_{\leq k}$-cover;
\item[\emph{(ii)}] Every $S^{op}$-module has an epic $(C\otimes_{R}S)fp_{n}F(S^{op})_{\leq k}$-envelope;
\item[\emph{(iii)}] Every quotient in $(C\otimes_{R}S)fp_{n}I(S)_{\leq k}$ is in $(C\otimes_{R}S)fp_{n}I(S)_{\leq k}$;
\item[\emph{(iv)}] Every submodule of   $(C\otimes_{R}S)fp_{n}F(S^{op})_{\leq k}$ is in $(C\otimes_{R}S)fp_{n}F(S^{op})_{\leq k}$.
\end{itemize}
Moreover, if $R$ is an $n$-coherent ring, then the above conditions are also equivalent to:
\begin{itemize}
\item[\emph{(v)}] The kernel of any $Cfp_{n}I(R)$-precover of any $R$-module is in $Cfp_{n}I(R)$;
\item[\emph{(vi)}] The cokernel of any $Cfp_{n}F(R^{op})$-preenvelope of any $R^{op}$-module is in $Cfp_{n}F(R^{op})$.
\end{itemize}
\end{corollary}

\begin{proof}
(i)$\Leftrightarrow$(iii).
First, we show that the class $(C\otimes_{R}S)fp_{n}I(S)_{\leq k}$ is closed under direct sums. Let $\{M_j\}_{j\in J}$ be a family of $S$-modules such that every $M_{j}\in(C\otimes_{R}S)fp_{n}I(S)_{\leq k}$. Then by Proposition \ref{1.p10p},  $M_{j}\in Cfp_{n}I(R)_{\leq k}$, and then by Proposition \ref{3-10}(i), $\bigoplus_{j\in J}M_{j}\in Cfp_{n}I(R)_{\leq k}$, and so by Proposition \ref{1.p10p}, $\bigoplus_{j\in J}M_{j}\in (C\otimes_{R}S)fp_{n}I(S)_{\leq k}$.
 So \cite[Proposition 4]{Z.JJG} shows that (i) and (iii) are equivalent.

(ii)$\Leftrightarrow$(iv).
The proof is similar to that of (i)$\Leftrightarrow$(iii) by using Propositions \ref{3-10}(ii), \ref{1.p10} and \cite[Theorem 2]{CD}.

(iii)$\Rightarrow$(iv).
Let $N\in (C\otimes_{R}S)fp_{n}F(S^{op})_{\leq k}$ and $N'$ be a submodule of $N$. From the short exact sequence
\[0\longrightarrow N'\longrightarrow N\longrightarrow N/N'\longrightarrow 0,\]
we get the short exact sequence
\[0\longrightarrow (N/N')^*\longrightarrow N^*\longrightarrow N'^*\longrightarrow 0.\]
By Propositions \ref{1.p10} and   \ref{3-8}(ii), $N\in Cfp_{n}F(R^{op})_{\leq k}$ if and only if $N^*\in Cfp_{n}I(R)_{\leq k}$ if and only if $N^{*}\in (C\otimes_{R}S)fp_{n}I(S)_{\leq k}$. Then by (iii) and Propositions \ref{1.p10p},  $N'^{*}\in (C\otimes_{R}S)fp_{n}I(S)_{\leq k}$ if and only if $N'^{*}\in Cfp_{n}I(R)_{\leq k}$, and consequently by Propositions \ref{3-8}(i) and \ref{1.p10}  , $N'\in Cfp_{n}F(R^{op})_{\leq k}$ if and only if $N'\in (C\otimes_{R}S)fp_{n}F(S^{op})_{\leq k}$. 

(iv)$\Rightarrow$(iii).
This is similar to that of (iii)$\Rightarrow$(iv).

(i)$\Rightarrow$(v).
Assume that $M$ is an $S$-module and that, by Theorem \ref{1.p124}, $f: F\longrightarrow M$ is a $(C\otimes_{R}S)fp_{n}I(S)_{\leq k}$-precover of $M$. Assume also that $g: E\longrightarrow M$ is a monic $(C\otimes_{R}S)fp_{n}I(S)_{\leq k}$-cover of $M$. Then \cite[Lemma 8.6.3]{EM} implies that $\operatorname{Ker}(f)\oplus E\cong F$.  By Proposition \ref{1.p10p}, $F\in Cfp_{n}I(R)_{\leq k}$, and so by Proposition \ref{3-10}(i),  $\operatorname{Ker}(f)\in  Cfp_{n}I(R)_{\leq k}$. Then $\operatorname{Ker}(f)\in (C\otimes_{R}S)fp_{n}I(S)_{\leq k}$ from Proposition \ref{1.p10p}.

(ii)$\Rightarrow$(vi).
The proof is similar to that of (i)$\Rightarrow$(v) by using the dual of \cite[Lemma 8.6.3]{EM}.

(vi)$\Rightarrow$(iv).
Assume that $N\in (C\otimes_{R}S)fp_{n}F(S^{op})_{\leq k}$ and that $N'$ is a submodule of $N$. Assume also that, by Theorem \ref{1.p125}, $f: N'\longrightarrow F$ is a $(C\otimes_{R}S)fp_{n}F(S^{op})_{\leq k}$-preenvelope of $N'$. Then we have the following commutative
diagram
\[\xymatrix{
&N'\ar@{=}[d]\ar[r]^{f}&F\ar[d]\ar[r]&\operatorname{Coker}(f)\ar[r]&0\\
0\ar[r]&N'\ar[r]&N&&
}\]
with exact rows. In particular, the sequence
\[0\longrightarrow N'\longrightarrow F\longrightarrow \operatorname{Coker}(f)\longrightarrow 0\]
is exact, and then by Remark \ref{3-2}(ii) and Corollary \ref{1.p1000}(ii), $N'\in (C\otimes_{R}S)fp_{n}F(S^{op})_{\leq k}$.

(v)$\Rightarrow$(iii).
The proof is similar to that of (vi)$\Rightarrow$(iv)  by using Corollary \ref{1.p1000}(i). 
\end{proof}
In next proposition, we investigate the homological behavior of Auslander and Bass classes under almost excellent extension of rings.
\begin{proposition}\label{1.p121}
Let $S\geq R$ be an almost excellent extension. Then the following assertions hold:
\begin{itemize}
\item[\emph{(i)}]
 If $M\in \mathcal{A}_{C}(R)$, then $(S\otimes_{R}M)\in \mathcal{A}_{C\otimes_{R}S}(S)$;
\item[\emph{(ii)}]
 If $M\in \mathcal{B}_{C}(R)$, then ${\rm Hom}_{R}(S,M)\in \mathcal{B}_{C\otimes_{R}S}(S)$.
\end{itemize}
\end{proposition} 
\begin{proof}
(i). There exists an exact sequence of $R$-modules
\[\cdots\longrightarrow F_{j+ 1}\longrightarrow F_j\longrightarrow F_{j- 1}\longrightarrow\cdots\longrightarrow F_1\longrightarrow F_0\longrightarrow C\longrightarrow 0,\]
where each $F_j$ is finitely generated and free for all $j\geq 0$. Since $M\in \mathcal{A}_{C}(R)$, we have the following exact sequence
\[\cdots\longrightarrow F_{j+ 1}\otimes_{R}M\longrightarrow F_j\otimes_{R}M\longrightarrow F_{j- 1}\otimes_{R}M\longrightarrow\cdots\longrightarrow F_1\otimes_{R}M\longrightarrow F_0\otimes_{R}M\longrightarrow C\otimes_{R}M\longrightarrow 0,\]
and since $S$ is flat $R$-module, we have the following commutative diagram
\[
\xymatrix{
S\otimes_{R}(F_{j+1}\otimes_{R}M)\ar[r]\ar[d]^{\cong}&S\otimes_{R}(F_{j}\otimes_{R}M)\ar[r]\ar[d]^{\cong}&\cdots \ar[r]&S\otimes_{R}(C\otimes_{R}M)\ar[r]\ar[d]^{\cong}&0\\
(F_{j+1}\otimes_{R}S)\otimes_{S}(S\otimes_{R}M)\ar[r]&(F_{j}\otimes_{R}S)\otimes_{S}(S\otimes_{R}M)\ar[r] &\cdots\ar[r]&(C\otimes_{R}S)\otimes_{S}(S\otimes_{R}M)\ar[r],&0,}
\]
and so  ${\rm Tor}^{S}_j(C\otimes_{R}S,S\otimes_{R}M)=0$  for any $j\geq 0$.

On the other hand, $C\otimes_{R}M\in\mathcal{B}_{C}(R)$ by \cite[Proposition 4.1]{HW}. So there exists the exact sequence

\[0\longrightarrow {\rm Hom}_{R}(C,C\otimes_{R}M)\longrightarrow\cdots\longrightarrow{\rm Hom}_{R}(F_{j},C\otimes_{R}M)\longrightarrow  {\rm Hom}_{R}(F_{j+1},C\otimes_{R}M)\longrightarrow\cdots,\]

and hence by \cite[Lemma 4.86]{Rot2}, we have the following commutative diagram:

\[
\xymatrix{
0\ar[r]&S\otimes_{R}{\rm Hom}_{R}(C,C\otimes_{R}M)\ar[r]\ar[d]^{\cong}&\cdots \ar[r]&S\otimes_{R}{\rm Hom}_{R}(F_{j+1},C\otimes_{R}M)\ar[d]^{\cong}&\\
0\ar[r]&{\rm Hom}_{R}(C,S\otimes_{R}(C\otimes_{R}M))\ar[r]\ar[d]^{\cong}&\cdots \ar[r]&{\rm Hom}_{R}(F_{j+1},S\otimes_{R}(C\otimes_{R}M))\ar[d]^{\cong}&\\
0\ar[r]&{\rm Hom}_{S}(C\otimes_{R}S,(C\otimes_{R}S)\otimes_{S}(S\otimes_{R}M))\ar[r]&\cdots \ar[r]&{\rm Hom}_{S}(F_{j+1}\otimes_{R}S,(C\otimes_{R}S)\otimes_{S}(S\otimes_{R}M)).&}
\]

Therefore, we deduce that  ${\rm Ext}_{S}^j(C\otimes_{R}S,(C\otimes_{R}S)\otimes_{S}(S\otimes_{R}M))=0,$   and also 
$$S\otimes_{R}M\cong S\otimes_{R}{\rm Hom}_{R}(C,C\otimes_{R}M)\cong{\rm Hom}_{S}(C\otimes_{R}S,(C\otimes_{R}S)\otimes_{S}(S\otimes_{R}M)).$$ Hence, it follows that $(S\otimes_{R}M)\in \mathcal{A}_{C\otimes_{R}S}(S).$

(ii). Let $M\in\mathcal{B}_{C}(R)$. Then by Proposition \ref{3-5-A-B}(ii), $M^{*}\in\mathcal{A}_{C}(R^{op})$. So $(S\otimes_{R^{op}}M^{*})\in \mathcal{A}_{C\otimes_{R^{op}}S}(S^{op})$  by (i). Since $S$ is a finitely presented $R$-module, \cite[Lemma 3.55]{Rot2} implies that ${\rm Hom}_{R^{op}}(S,M)^{*}\in \mathcal{A}_{C\otimes_{R^{op}}S}(S^{op})$. Consider the exact sequence $\mathcal{Y}=\cdots\longrightarrow F_1\longrightarrow F_0\longrightarrow C\longrightarrow 0$ of $R$-modules,
where each $F_j$ is finitely generated and free for all $j\geq 0$. Then by Lemma \ref{1.p1}, $\mathcal{Y}\otimes_{R}S$ is a $\mathcal{Y}\otimes_{R}S$-finitely presented, and then similar to the proof of Proposition \ref{3-5-A-B}(ii), ${\rm Hom}_{R}(S,M)\in \mathcal{B}_{C\otimes_{R}S}(S)$.
\end{proof}

\begin{corollary}\label{1.p1211}
Let $S\geq R$ be an almost excellent extension. Then the following assertions hold:
\begin{itemize}
\item[\emph{(i)}]
 $fp_{n}F(S)_{\leq k}\subseteq \mathcal{A}_{C\otimes_{R}S}(S)$;
\item[\emph{(ii)}]
 $fp_{n}I(S)_{\leq k}\subseteq \mathcal{B}_{C\otimes_{R}S}(S)$.
\end{itemize}
\end{corollary} 
\begin{proof}
(i). Let $M\in fp_{n}F(S)_{\leq k}$. Then there exists an exact sequence
\[0\longrightarrow Y_k\longrightarrow Y_{k- 1} \longrightarrow \cdots\longrightarrow X_{ 1}\longrightarrow X_0\longrightarrow M\longrightarrow0\]
of $S$-modules with each $X_i\in fp_nF(S)$ for all $0\leq i\leq k$. By \cite[Proposition 3.2]{JZW}, $X_i\in fp_nF(R)$. So we obtain that $M\in fp_{n}F(R)_{\leq k}$. Thus by Lemma \ref{3-5}(ii), $M\in \mathcal{A}_{C}(R)$, and so by Proposition \ref{1.p121}(i),  $(S\otimes_{R}M)\in \mathcal{A}_{C\otimes_{R}S}(S)$. By \cite[Lemma 1.1]{WX}, we see that   $S$-module $M$
is isomorphic to a direct summand of $S\otimes_{R}M$. Then \cite[Proposition 4.2]{HW} implies that $M\in\mathcal{A}_{C\otimes_{R}S}(S)$.

(ii).This is similar to the proof of (i).
\end{proof}
\begin{lemma}\label{4-2q}
 Let $S\geq R$ be an almost excellent extension. Then the following assertions hold true:
\begin{itemize}
\item[\emph{(i)}] $(C\otimes_{R}S)fp_{n}I(S)_{\leq k}\subseteq \mathcal{A}_{C\otimes_{R}S}(S)$;
\item[\emph{(ii)}] $(C\otimes_{R}S)fp_{n}F(S)_{\leq k}\subseteq \mathcal{B}_{C\otimes_{R}S}(S)$.
\end{itemize}
\end{lemma}

\begin{proof}
(i).  Assume that $M\in(C\otimes_{R}S)fp_{n}I(S)_{\leq k}$. Then by Proposition \ref{1.p10p}, $M\in Cfp_{n}I(R)_{\leq k}$, and so $M\in \mathcal{A}_{C}(R)$ by Lemma \ref{4-2}(i). Thus by Proposition \ref{1.p121}(i), $(S\otimes_{R}M)\in \mathcal{A}_{C\otimes_{R}S}(S)$. By \cite[Lemma 1.1]{WX}, $M$ is isomorphic to a direct summand of 
$S\otimes_{R}M$, and consequently by \cite[Proposition 4.2]{HW}, $M\in \mathcal{A}_{C\otimes_{R}S}(S)$.

(ii). This is similar to the first part.
\end{proof}
In the following, we investigate Foxby equivalence relative to the class $(C\otimes_{R}S)fp_{n}I(S)_{\leq k}$ with the class $fp_nI(S)_{\leq k}$ and  the class $(C\otimes_{R}S)fp_{n}F(S)_{\leq k}$ with the class $fp_nF(S)_{\leq k}$, where $S\geq R$ is an almost excellent extension.
\begin{proposition}\label{4-3i}
Let $S\geq R$ be an almost excellent extension. Then we have the following equivalences of categories:
\begin{itemize}
\item[\emph{(i)}]
$\xymatrix@C=3cm{(C\otimes_{R}S)fp_nI(S)_{\leq k}\ar@<1ex>[r]^{(C\otimes_{R}S)\otimes_{S} -}_{\sim}&\ar@<1ex>[l]^{\operatorname {Hom}_{S}(C\otimes_{R}S, -)}fp_{n}I(S)_{\leq k};}$
\item[\emph{(ii)}]
$\xymatrix@C=3cm{fp_nF(S)_{\leq k}\ar@<1ex>[r]^{(C\otimes_{R}S)\otimes_{S} -}_{\sim}&\ar@<1ex>[l]^{\operatorname {Hom}_{S}(C\otimes_{R}S, -)}(C\otimes_{R}S)fp_{n}F(S)_{\leq k}.}$
\end{itemize}
\end{proposition}

\begin{proof}
(i). Let $M\in (C\otimes_{R}S)fp_nI(S)_{\leq k}$. Then there exists an exact sequence
\[0\longrightarrow M\longrightarrow I_0 \longrightarrow I_1\longrightarrow \cdots\longrightarrow I_{k- 1}\longrightarrow I_k\longrightarrow 0\]
of $S$-modules with each $I_i\in (C\otimes_{R}S)fp_nI(S)$ for all $0\leq i\leq k$. By Proposition \ref{1.p10p}, each $I_i\in Cfp_nI(R)$, and so by Proposition \ref{4-3}(i) and \cite[Proposition 3.2]{JZW}, $C\otimes_{R}I_{i}\in fp_nI(R)$ if and only if $C\otimes_{R}I_{i}\in fp_nI(S)$. On the other hand,   
by Proposition \ref{1.p10p}, $M\in Cfp_nI(R)_{\leq k}$, and then by Lemma \ref{4-2}(i), $M$ and  $I_{i}$ are in $\mathcal{A}_{C}(R)$. So, there exists  exact sequence
\[0\longrightarrow C\otimes_{R}M\longrightarrow C\otimes_{R}I_0 \longrightarrow C\otimes_{R}I_1\longrightarrow \cdots\longrightarrow C\otimes_{R}I_{k- 1}\longrightarrow C\otimes_{R}I_k\longrightarrow 0\]
of $S$-modules with each $C\otimes_{R}I_i\in Cfp_nI(S)$ for all $0\leq i\leq k$, and hence $(C\otimes_{R}S)\otimes_{S}M\cong C\otimes_{R}M\in fp_nI(S)_{\leq k}$.

Also,  $M\in\mathcal{A}_{C\otimes_{R}S}(S)$ by Lemma \ref{4-2q}(i). So we have $M\cong \operatorname {Hom}_{S}(C\otimes_{R}S,(C\otimes_{R}S)\otimes_{S}M)$. 

Now, let $N\in fp_{n}I(S)_{\leq k}$. Then there exists an exact sequence
\[0\longrightarrow N\longrightarrow X_0 \longrightarrow X_1\longrightarrow \cdots\longrightarrow X_{k- 1}\longrightarrow X_k\longrightarrow 0\]
of $S$-modules with each $X_i\in fp_nI(S)$ for all $0\leq i\leq k$. By \cite[Proposition 3.2]{JZW}, $X_i\in fp_nI(R)$. So we get that $N\in fp_{n}I(R)_{\leq k}$. Thus by Proposition \ref{4-3}(i), ${\rm Hom}_{R}(C,N)\in Cfp_{n}I(R)_{\leq k}$. We have 
${\rm Hom}_{S}(C\otimes_{R}S,N)\cong {\rm Hom}_{R}(C,{\rm Hom}_{S}(S,N))\cong{\rm Hom}_{R}(C,N)$. Hence ${\rm Hom}_{S}(C\otimes_{R}S,N)\in Cfp_{n}I(R)_{\leq k}$, and then by  Proposition \ref{1.p10p}, ${\rm Hom}_{S}(C\otimes_{R}S,N)\in (C\otimes_{R}S)fp_nI(S)_{\leq k}$.

(ii). This is similar to that of (i).
\end{proof}
In the following, we give equivalent conditions with modules of the classes $\mathcal{A}_{C}(R)$ and  $\mathcal{B}_{C}(R)$ under almost excellent extension of rings.

\begin{proposition}\label{1.p10p-A}
Let $S\geq R$ be an almost excellent extension and $M$ an $S$-module. Then the following assertions are equivalent:
\begin{itemize}
\item[\emph{(i)}]
 $M\in\mathcal{A}_{C}(R)$;
\item[\emph{(ii)}]
 $(S\otimes_{R}M)\in\mathcal{A}_{C\otimes_{R}S}(S)$;
\item[\emph{(iii)}]
 $M\in\mathcal{A}_{C\otimes_{R}S}(S)$.
\end{itemize}
\end{proposition} 
\begin{proof}
(i)$\Longrightarrow $(ii). It is clear by Proposition \ref{1.p121}(1).

(ii)$\Longrightarrow $(iii). By \cite[Lemma 1.1]{WX}, $_SM$ is isomorphic to a direct summand of $S$-module $S\otimes_{R}M$. Then by \cite[Proposition 4.2(1)]{HW},  $M\in\mathcal{A}_{C\otimes_{R}S}(S)$.

(iii)$\Longrightarrow $(i). Let $M\in\mathcal{A}_{C\otimes_{R}S}(S)$. Then ${\rm Tor}_{j}^S(C\otimes_{R}S,M)=0$ for any $j\geq 1$. So,
we have the following commutative diagram:

\[
\xymatrix{
\cdots \ar[r]&(F_{1}\otimes_{R}S)\otimes_{S}M\ar[r]\ar[d]^{\cong}&(F_{0}\otimes_{R}S)\otimes_{S}M\ar[r]\ar[d]^{\cong}&(C\otimes_{R}S)\otimes_{S}M\ar[r]\ar[d]^{\cong}&0\\
\cdots \ar[r]&F_{1}\otimes_{R}M\ar[r]&F_{0}\otimes_{R}M\ar[r]& C\otimes_{R}M\ar[r]&0,\\
}
\]
where the first line is exact by (iii), and so the second line is also exact, and then  ${\rm Tor}_{j}^R(C,M)=0$ for any $j\geq 1$.

On the other hand, ${\rm Ext}_{S}^j(C\otimes_{R}S,(C\otimes_{R}S)\otimes_{S}M)=0$ for any $j\geq 1$. Then, we have the following commutative diagram:
\[
\xymatrix{
0\ar[r]&{\rm Hom}_{S}(C\otimes_{R}S,(C\otimes_{R}S)\otimes_{S}M)\ar[r]\ar[d]^{\cong}&{\rm Hom}_{S}(C\otimes_{R}S,(C\otimes_{R}S)\otimes_{S}M)\ar[r]\ar[d]^{\cong} &\cdots\\
0\ar[r]&{\rm Hom}_{S}(C\otimes_{R}S,C\otimes_{R}M)\ar[r]\ar[d]^{\cong}&{\rm Hom}_{S}(C\otimes_{R}S,C\otimes_{R}M)\ar[r]\ar[d]^{\cong}&\cdots\\
0\ar[r]&{\rm Hom}_{R}(C,C\otimes_{R}M)\ar[r]&{\rm Hom}_{R}(C,C\otimes_{R}M)\ar[r]&\cdots,}
\]
where the first and second lines are exact by (iii), and so the third  line is also exact, and then  ${\rm Ext}^{j}_R(C,M)=0$ for any $j\geq 1$.

Also by (iii) and \cite[Theorem 2.75]{Rot2}, we have $$M\cong {\rm Hom}_{S}(C\otimes_{R}S,(C\otimes_{R}S)\otimes_{S}M)\cong{\rm Hom}_{S}(C\otimes_{R}S,C\otimes_{R}M)\cong{\rm Hom}_{R}(C,C\otimes_{R}M).$$ Consequently, $M\in\mathcal{A}_{C}(R)$. 
\end{proof}

\begin{proposition}\label{1.p10p-B}
Let $S\geq R$ be an almost excellent extension and $M$ an $S$-module. Then the following assertions are equivalent:
\begin{itemize}
\item[\emph{(i)}]
 $M\in\mathcal{B}_{C}(R)$;
\item[\emph{(ii)}]
 ${\rm Hom}_{R}(S,M)\in\mathcal{B}_{C\otimes_{R}S}(S)$;
\item[\emph{(iii)}]
 $M\in\mathcal{B}_{C\otimes_{R}S}(S)$.
\end{itemize}
\end{proposition} 
\begin{proof}
 This is similar to the proof of Proposition \ref{1.p10p-A}.
\end{proof}
Under chang of rings, Auslander and Bass classes  are equivalent under the
pair of functors.
\begin{proposition}\label{1.p10p-AB}
Let $S\geq R$ be an almost excellent extension. Then there are
equivalences of categories:
\[\xymatrix@C=3cm{\mathcal{A}_{C\otimes_{R}S}(S)\ar@<1ex>[r]^{(C\otimes_{R}S)\otimes_{S} -}_{\sim}&\ar@<1ex>[l]^{\operatorname{Hom}_{S}(C\otimes_{R}S, -)}\mathcal{B}_{C\otimes_{R}S}(S)}\]
\end{proposition}
\begin{proof}
 By Proposition \ref{1.p10p-A}, $M\in\mathcal{A}_{C\otimes_{R}S}(S)$ if and only if $M\in \mathcal{A}_{C}(R)$. Then by \cite[Proposition 4.1]{HW}, $(C\otimes_{R}M)\in \mathcal{B}_{C}(R)$, and so  $(C\otimes_{R}S)\otimes_{S}M\cong (C\otimes_{R}M)\in\mathcal{B}_{C\otimes_{R}S}(S)$ by Proposition \ref{1.p10p-B}.  Also, we have $M\cong {\rm Hom}_{R}(C,C\otimes_{R}M)\cong{\rm Hom}_{S}(C\otimes_{R}S,(C\otimes_{R}S)\otimes_{S}M)$.
 
 On the other hand, By Proposition \ref{1.p10p-B}, $N\in \mathcal{B}_{C\otimes_{R}S}(S)$ if and only if $N\in \mathcal{B}_{C}(R)$. Thus by \cite[Proposition 4.1]{HW}, ${\rm Hom}_{R}(C,N)\in \mathcal{A}_{C}(R)$, and so ${\rm Hom}_{S}(C\otimes_{R}S, N)\cong {\rm Hom}_{R}(C,N)\in\mathcal{A}_{C\otimes_{R}S}(S)$ by Proposition \ref{1.p10p-A} and \cite[Theorem 2.75]{Rot2}. Also, we have $$N\cong C\otimes_{R}{\rm Hom}_{R}(C,N)\cong (C\otimes_{R}S)\otimes_{S}{\rm Hom}_{S}(C\otimes_{R}S,N).$$
\end{proof}

By using Corollary \ref{1.p1211}, Lemma \ref{4-2q} and Propositions \ref{1.p121},  \ref{4-3i}, \ref{1.p10p-AB}, we get Foxby Equivalence  under an almost excellent extension: 

\begin{theorem}\emph{(Foxby Equivalence under  almost excellent extension of rings)}\label{4-5-S-R}
 Then we have the following equivalences of categories:
\[\xymatrix@C=3cm{
fp_nF(S)\ar@<1ex>[r]^{(C\otimes_{R}S)\otimes_{S} -}_{\sim}\ar@{^{(}->}[d]&\ar@<1ex>[l]^{\operatorname{Hom}_{S}(C\otimes_{R}S, -)}(C\otimes_{R}S)fp_nF(S)\ar@{^{(}->}[d]\\
fp_nF(S)_{\leq k}\ar@<1ex>[r]^{(C\otimes_{R}S)\otimes_{S} -}_{\sim}\ar@{^{(}->}[d]&\ar@<1ex>[l]^{\operatorname{Hom}_{S}(C\otimes_{R}S, -)}(C\otimes_{R}S)fp_nF(S)_{\leq k}\ar@{^{(}->}[d]\\
\mathcal{A}_{C\otimes_{R}S}(S)\ar@<1ex>[r]^{(C\otimes_{R}S)\otimes_{S} -}_{\sim}& \ar@<1ex>[l]^{\operatorname{Hom}_{S}(C\otimes_{R}S, -)}\mathcal{B}_{C\otimes_{R}S}(S)\\
(C\otimes_{R}S)fp_{ n}I(S)_{\leq k}\ar@<1ex>[r]^{(C\otimes_{R}S)\otimes_{S} -}_{\sim}\ar@{^{(}->}[u]&\ar@ <1ex>[l]^{\operatorname{Hom}_{S}(C\otimes_{R}S, -)}fp_nI(S)_{\leq k}\ar@{^{(}->}[u]\\
(C\otimes_{R}S)fp_{ n}I(S)\ar@<1ex>[r]^{(C\otimes_{R}S)\otimes_{S} -}_{\sim}\ar@{^{(}->}[u]&\ar@<1ex>[l]^{\operatorname{Hom}_{S}(C\otimes_{R}S, -)}fp_nI(S)\ar@{^{(}->}[u].
}\]
\end{theorem}

\bibliographystyle{amsplain}

\end{document}